\title{The relative rank of the endomorphism monoid of a finite $G$-set}
\author{Alonso Castillo-Ramirez\footnote{Email: alonso.castillor@academicos.udg.mx}, \  Ram\'on H. Ruiz-Medina  \\[1em]
\small{Centro Universitario de Ciencias Exactas e Ingenier\'ias}, \\ 
\small{Universidad de Guadalajara, Guadalajara, M\'exico.}}
\newtheorem{theorem}{Theorem}[]
\newtheorem{lemma}[theorem]{Lemma}
\newtheorem{remark}[theorem]{Remark}
\newtheorem{claim}[theorem]{Claim}
\newtheorem{corollary}[theorem]{Corollary}
\newtheorem{proposition}[theorem]{Proposition}
\newtheorem{example}[theorem]{Example}
\newtheorem{definition}[theorem]{Definition}
\newcommand{\CA}{\mathrm{CA}}
\newcommand{\Rank}{\mathrm{Rank}}
\newcommand{\Sym}{\mathrm{Sym}}
\newcommand{\End}{\mathrm{End}}
\newcommand{\Aut}{\mathrm{Aut}}
\newcommand{\Stabs}{\mathrm{Stabs}}
\newcommand{\Tran}{\mathrm{Tran}}
\newcommand{\id}{\mathrm{id}}
\newcommand{\Conj}{\mathrm{Conj}}
\begin{document}

\maketitle

\begin{abstract}
For a group $G$ acting on a set $X$, let $\End_G(X)$ be the monoid of all $G$-equivariant transformations, or $G$-endomorphisms, of $X$, and let $\Aut_G(X)$ be its group of units. After discussing few basic results in a general setting, we focus on the case when $G$ and $X$ are both finite in order to determine the smallest cardinality of a set $W \subseteq \End_G(X)$ such that $W \cup \Aut_G(X)$ generates $\End_G(X)$; this is known in semigroup theory as the \emph{relative rank} of $\End_G(X)$ modulo $\Aut_G(X)$.  \\

\textbf{Keywords:} $G$-set, equivariant transformation, $G$-endomorphism monoid, $G$-auto-\allowbreak morphism group, relative rank. \\

\textbf{MSC 2020:} 20B25, 20E22, 20M20.
\end{abstract}


\section{Introduction}

For any group $G$, a \emph{$G$-set} is simply a set $X$ on which $G$ acts; this is, there exists a function $\cdot : G \times X \to X$ such that $e \cdot x = x$, for all $x \in X$, and $g \cdot (h \cdot x) = gh \cdot x$, for all $x \in X$, $g,h \in G$. In the context of semigroup theory, $G$-sets are also known as \emph{$G$-acts}. A \emph{$G$-equivariant transformation} of $X$, or a \emph{$G$-endomorphism} of $X$, is a function $\tau : X \to X$ such that $\tau(g \cdot x) = g \cdot \tau(x)$, for all $g \in G$, $x \in X$. In general, $G$-equivariant maps are the standard morphisms considered in the category of $G$-sets (\cite[p. 86]{AB14}), and they are widely used in various areas of mathematics such as equivariant topology, topological dynamics, representation theory, and even in statistical inference, via the so-called equivariant estimators. 

In this setting, two basic objects, which have been previously studied from a semigroup-theoretic perspective (e.g., see \cite{Bulman, Bulman2, Fleischer,Knauer}), are the monoid $\End_G(X)$, consisting of all $G$-equivariant transformations of $X$ equipped with composition, and its group of units $\Aut_G(X)$, consisting of all bijective $G$-equivariant transformations of $X$. In particular, \emph{free} $G$-sets (i.e., $G$-sets on which all point stabilizers are trivial) and their endomorphisms have gained special interest, as the former are examples of \emph{independence algebras} (see \cite{Ara2,Dandan, Gould1, Gould2,Gray2}).   

Our work in this paper is related with the concept of \emph{rank} of a monoid $M$, denoted by $\Rank(M)$, which is the smallest cardinality of a generating set of $M$. For any subset $N \subseteq M$, define the \emph{relative rank of $M$ modulo $N$} by 
\[ \Rank(M : N )  := \min\{|W| :  \langle W \cup N \rangle = M  \}. \]
Finding the relative rank of a finite monoid $M$ modulo its group of units $U$ is a natural and important question in semigroup theory (e.g., see \cite{Ara, Ara3, Dimi2, Gray, Higgins,HRH98,R94}), as it is linked to the rank of $M$ via the formula
\[ \Rank(M)= \Rank(U)+ \Rank(M:U).  \]

Our main theorem determines the relative rank of $\End_G(X)$ modulo $\Aut_G(X)$ when $G$ and $X$ are both finite. Before stating it, we introduce some notation: let $[H]$ be the conjugacy class of a subgroup $H \leq G$, let $\Conj_G(X)$ be the set of conjugacy classes of point stabilizers in $X$, let $N_G(H)$ be the normalizer of $H$ in $G$, and let $\mathcal{O}_{[H]}$ be the set of all $G$-orbits whose point stabilizer is conjugate to $H$. 
\begin{theorem}\label{main-intro}
Let $G$ be a finite group acting on a finite set $X$. Let $[H_1], [H_2], \dots, [H_r]$ be the list of different conjugacy classes of subgroups in $ \Conj_G(X)$. Let $N_i:=N_G(H_i)$, and for any $K \leq G$, consider the $N_i$-conjugacy class $[K]_{N_i} :=\{ gKg^{-1} : g \in N_i \}$. Then,    
\[ \Rank( \End_G(X) : \Aut_G(X) ) = \sum_{i=1}^r \vert U(H_i) \vert  - \kappa_G(X),  \]
where $U(H_i) := \{ [G_x]_{N_i} : x \in X, H_i \leq G_x \}$ and $\kappa_G(X) := \vert \{ i : \vert \mathcal{O}_{[H_i]} \vert = 1 \} \vert$.
\end{theorem}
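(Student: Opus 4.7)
The plan is to split the proof into a preparatory structural observation, an upper-bound construction, and a lower-bound argument; the last is where the bulk of the work lies.

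First, I would record the structural fact underlying both bounds. For each orbit $\mathcal{O} \in \mathcal{O}_{[H_i]}$ and any $x_0 \in \mathcal{O}$ with $G_{x_0} = H_i$, the restriction $\tau|_{\mathcal{O}}$ is uniquely determined by $\tau(x_0) \in X^{H_i}$, and $G_{\tau(x_0)}$ automatically contains $H_i$. Pre-composing $\tau|_{\mathcal{O}}$ with an element of $\Aut_G(\mathcal{O}) \cong N_i/H_i$ translates $\tau(x_0)$ by $N_i$, so it alters $G_{\tau(x_0)}$ only by $N_i$-conjugation. Each $\mathcal{O} \in \mathcal{O}_{[H_i]}$ therefore acquires a well-defined \emph{fine type} $\phi_\tau(\mathcal{O}) := [G_{\tau(x_0)}]_{N_i} \in U(H_i)$; the resulting function $\phi_\tau : \mathcal{O}_{[H_i]} \to U(H_i)$ is invariant under post-composition by units, while pre-composition by units merely permutes its domain. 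Combined with the fibre partition induced on $\mathcal{O}_{[H_i]}$ by $\tau$, this supplies the invariants that both halves of the proof will track.

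For the upper bound, I would construct an explicit $W$ of the required size. For each $i$ and each $C \in U(H_i)$, fix a representative $\mathcal{O}_{i,1} \in \mathcal{O}_{[H_i]}$, a point $x_{i,1} \in \mathcal{O}_{i,1}$ with $G_{x_{i,1}} = H_i$, and a witness $y_{i,C} \in X^{H_i}$ with $[G_{y_{i,C}}]_{N_i} = C$. Let $w_{i,C} \in \End_G(X)$ be the identity on every orbit other than $\mathcal{O}_{i,1}$ and the unique $G$-equivariant extension of $x_{i,1} \mapsto y_{i,C}$ on $\mathcal{O}_{i,1}$. When $|\mathcal{O}_{[H_i]}| = 1$ and $C = [H_i]_{N_i}$, one may take $y_{i,C} = x_{i,1}$, so that $w_{i,C}$ is an automorphism of $\mathcal{O}_{i,1}$ and lies in $\Aut_G(X)$; discarding these cases yields $|W| = \sum_i |U(H_i)| - \kappa_G(X)$. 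To show $W \cup \Aut_G(X)$ generates $\End_G(X)$, I would process an arbitrary $\tau$ orbit by orbit: conjugate by units to align each source orbit of type $[H_i]$ onto $\mathcal{O}_{i,1}$, apply $w_{i,\phi_\tau(\cdot)}$, and transport the image onto its intended destination with another unit.

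The lower bound is the genuinely technical step. Given any $W$ with $\langle W \cup \Aut_G(X) \rangle = \End_G(X)$, I would argue that for each $(i,C)$ not discarded above, $W$ contains at least one element in the same $\Aut_G(X) \times \Aut_G(X)$-bi-orbit as $w_{i,C}$. The essential tool is the monotonicity $G_{(\sigma\tau)(x)} \supseteq G_{\tau(x)}$: stabilizers only grow under composition, so once an orbit has been pushed into a strictly larger stabilizer type no subsequent factor can retrieve it. In a factorization $w_{i,C} = u_0 w_1' u_1 \cdots w_n' u_n$ this should force some $w_l'$ to execute the $(i,C)$-move exactly and to leave every other orbit of $X$ undisturbed in the same way $w_{i,C}$ does, since collapses among type-$[H_i]$ orbits cannot be reversed by units, and type-enlargements cannot be reversed at all. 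The $w_{i,C}$ lie in pairwise distinct bi-orbits (distinguishable by $\phi$ together with the fibre partition on $\mathcal{O}_{[H_i]}$), so the bound $|W| \geq \sum_i |U(H_i)| - \kappa_G(X)$ follows, matching the upper bound. The main obstacle I anticipate is making this ``no uncancellable side effects'' step rigorous, which requires careful bookkeeping of how $\phi$ and the fibre partitions behave under composition, along with a case analysis on where a factor's extraneous effects can land.
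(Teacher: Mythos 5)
Your proposal follows essentially the same route as the paper: an explicit generating set consisting of one orbit-collapsing map per pair $(i,C)$ with $C \in U(H_i)$ (omitting the degenerate cases counted by $\kappa_G(X)$), matched by a lower bound showing that any set generating $\End_G(X)$ modulo $\Aut_G(X)$ must contain a map realizing each such collapse type, using exactly your observation that kernels and point stabilizers only grow under composition. The paper packages your bi-orbit classification as ``elementary collapsings of type $(i,[K]_{N_i})$'' and supplies the two details your sketch leaves open --- the within-block generation, handled via the classical fact that $\Tran(A)$ is generated by $\Sym(A)$ together with one map of image size $\vert A\vert-1$, and the identification of the \emph{first} non-invertible factor of a factorization as the required collapse, handled by a kernel-cardinality comparison --- but the underlying argument is the same.
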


In order to prove Theorem \ref{main-intro}, we begin by discussing few basic results that hold for an arbitrary $G$-set and are probably known in the folklore of $G$-equivariant maps, but we could not find them published anywhere. For example, we observe that when the action of $G$ on $X$ is transitive, $\Aut_G(X) = \End_G(X)$ if and only if a point stabilizer in $X$ is not properly contained in any of its conjugates. Hence, the situation $\Aut_G(X) = \End_G(X)$ always occurs for transitive actions where point stabilizers are finite, or finite-index, or normal subgroups of $G$. Besides this, for a general $G$-set, we use the Imprimitive Wreath Product Embedding Theorem \cite[Theorem 5.5]{PS} to decompose $\Aut_G(X)$ as a direct product of wreath products:   
\[ \Aut_{G}(X) \cong \prod_{[H] \in \Conj_G(X) } ((N_G(H)/H) \wr \Sym(\mathcal{O}_{[H]}) ). \]

One of the motivations for our study comes from the \emph{shift action} of $G$ on $A^G$, where $A$ is a set, and $A^G$ is the set of all functions $x : G \to A$. This action, which has fundamental importance in symbolic dynamics and the theory of cellular automata (see \cite{Cecc}), is defined as follows: for every $g \in G$ and $x \in A^G$, 
\[ (g \cdot x)(h) := x(g^{-1}h), \quad \forall h \in G.  \]
When $A^G$ is considered as a topological space with the product topology of the discrete topology of $A$, the $G$-equivariant continuous transformations of $A^G$ turn out to be precisely the \emph{cellular automata} over $A^G$ (see the Curtis-Hedlund Theorem in \cite[Ch. 1]{Cecc}). Hence, the monoid $\CA(G;A)$, consisting of all cellular automata over $A^G$, is a submonoid of $\End_G(A^G)$. When $G$ and $A$ are both finite, it is clear that $\CA(G;A) = \End_G(A^G)$. This setting has been studied in \cite{cas3}, and the present paper generalizes and refines many of the results obtained there. In particular, Theorem \ref{main-intro} significantly generalizes Theorem 7 in \cite{cas3}, where the the relative rank of $\End_G(A^G)$ modulo $\Aut_G(A^G)$ was determined only when $G$ is a finite \emph{Dedekind group} (i.e. all subgroups of $G$ are normal).

The structure of this paper is as follows. In Section 2, we provide all the basic results on $G$-equivariant transformations that we need in order to prove Theorem \ref{main-intro}; we believe some of these basic results may be of interest in their own right. In Section 3 we prove Theorem \ref{main-intro} in three steps. First, we show that the set $W$, consisting of all transformations $[x \mapsto y]$, with $Gx \neq Gy$ and $G_x \leq G_y$, defined in (\ref{collapsing}), generates $\End_G(X)$ modulo $\Aut_G(X)$ (Lemma \ref{le-gen}). Second, we show that a particular subset $V$ of $W$ also generates $\End_G(X)$ modulo $\Aut_G(X)$ (Lemma \ref{legen2}). Finally, we complete the proof by showing that any set that generates $\End_G(X)$ modulo $\Aut_G(X)$ must contain at least $\vert V \vert$ transformations (Theorem \ref{main}). The cardinality of $V$ is given by the formula of Theorem \ref{main-intro}.


\section{Basic results}

For the rest of this paper, let $G$ be a group acting on a set $X$. We assume that the reader is familiar with the basic results on group actions (e.g., see \cite[Sec. 2]{PS}). Besides introducing notation, in this section we prove some required basic results on $G$-sets that we may not find published anywhere else. 

Denote by $\Sym(X)$ the symmetric group on $X$. When $X$ is finite and $\vert X \vert = n$, we may write $\Sym_n$ instead of $\Sym(X)$. Denote by $Gx$ and $G_{x}$ the $G$-orbit and stabilizer of $x\in X$, respectively: 
\[ Gx := \{ g \cdot x : g \in G \} \quad \text{ and } \quad G_x := \{ g \in G : g \cdot x = x \}. \]
Two basic results are that the set of $G$-orbits form a partition of $X$, and that stabilizars are subgroups of $G$. 

We shall repeatedly use the \emph{conjugation action} of $G$ on the set of its subgroups; this is defined, for any $g \in G$ and any subgroup $H \leq G$, by $g \cdot H := gHg^{-1}$. The $G$-orbits of this action, denoted by $[H]$, are called \emph{conjugacy classes}, while the stabilizers, denoted by $N_G(H)$, are called \emph{normalizers}; in other words, for any $H \leq G$, 
\[ [H] := \{ gHg^{-1} : g \in G \} \quad \text{ and } \quad N_G(H) := \{ g \in G : g H g^{-1} = H \}. \]

For a subset $Y \subseteq X$, define the \emph{setwise} and \emph{elementwise} stabilizers of $Y$ as the subgroups $G_Y = \{ g \in G : g \cdot Y = Y \}$ and $G_{(Y)} = \{ g \in G : g \cdot y = y, \forall y \in Y \}$, respectively. 

The main object of this paper is the monoid of $G$-equivariant transformations of $X$, which is denoted by
\[ \End_G(X) := \{ \tau : X \to X : \tau(g \cdot x) = g \cdot \tau(x), \forall g \in G, x \in X \}. \]
Let $\Aut_G(X)$ be the group of units of $\End_G(X)$, i.e.
\[ \Aut_G(X) := \{ \tau \in \End_G(X) : \exists \sigma \in \End_G(X), \ \sigma \tau = \tau \sigma = \id \}.  \]
When $G$ is a subgroup of $\Sym(X)$, i.e. $G$ is a \emph{permutation group}, note that $\Aut_G(X)$ is equal to the centralizer of $G$ in $\Sym(X)$.  

It is clear that $G$-equivariant transformations map $G$-orbits to $G$-orbits, as $\tau(Gx) = G \tau(x)$, for all $x \in X$, $\tau \in \End_G(X)$. Moreover, it is easy to see that if $\tau \in \End_G(X)$ is bijective, then $\tau^{-1}$ is also $G$-equivariant, so $\Aut_G(X)  = \End_G(X) \cap \Sym(X)$. 

Both $\End_G(X)$ and $\Aut_G(X)$ act naturally on $X$ by evaluation: for any $\tau \in \End_G(X)$ and $x \in X$, this action is defined by $\tau \cdot x := \tau(x)$. 

A subset $Y \subset X$ is \emph{$G$-invariant} if $g \cdot y \in Y$, for all $y \in Y$. It is easy to prove that $Y \subseteq X$ is $G$-invariant if and only if $Y$ is a union of $G$-orbits. We may restrict the action of $G$ to $Y$, and consider the monoid $\End_G(Y)$ and the group $\Aut_G(Y)$.

\begin{lemma}\label{le-invariant}
For any $G$-invariant subset $Y \subseteq X$, the following hold:
\begin{enumerate}
\item $\End_G(Y)$ is isomorphic to a submonoid of $\End_G(X)$, and $\Aut_G(Y)$ is isomorphic to a subgroup of $\Aut_G(X)$. 
\item If $Y$ is also $\Aut_G(X)$-invariant (i.e., $\tau(y) \in Y$, for all $y \in Y$, $\tau \in \Aut_G(X)$), then $\Aut_G(Y)$ is isomorphic to a normal subgroup of $\Aut_G(X)$. 
\item $\Aut_G(Y) \cong \Aut_G(X)_{Y}/ \Aut_G(X)_{(Y)}$. 

\end{enumerate}
\end{lemma}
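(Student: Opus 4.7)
The plan is to build everything around two very natural maps: the extension-by-identity map $\iota:\End_G(Y)\to\End_G(X)$ and the restriction map $\rho:\Aut_G(X)_Y\to\Aut_G(Y)$. Since $Y$ is $G$-invariant and $G$ acts on $X$ by bijections, the complement $X\setminus Y$ is also $G$-invariant. Define $\iota(\tau)$ to agree with $\tau$ on $Y$ and with the identity on $X\setminus Y$. On each of the $G$-invariant pieces $Y$ and $X\setminus Y$ the map $\iota(\tau)$ is visibly $G$-equivariant, so $\iota(\tau)\in\End_G(X)$. Checking that $\iota$ respects composition and identity is immediate because elements of $\iota(\End_G(Y))$ fix $X\setminus Y$ pointwise, and injectivity is clear. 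The same formula turns bijections on $Y$ into bijections on $X$, so $\iota$ restricts to an injective group homomorphism on $\Aut_G(Y)$, which proves part~1.

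For part~2, assume that $Y$ is $\Aut_G(X)$-invariant. Since every $\sigma\in\Aut_G(X)$ is a bijection satisfying $\sigma(Y)\subseteq Y$ and $\sigma^{-1}(Y)\subseteq Y$, we in fact get $\sigma(Y)=Y$ and hence $\sigma(X\setminus Y)=X\setminus Y$. Fix $\sigma\in\Aut_G(X)$ and $\tau\in\Aut_G(Y)$. For $x\in X\setminus Y$ we have $\sigma^{-1}(x)\in X\setminus Y$, so $\iota(\tau)\sigma^{-1}(x)=\sigma^{-1}(x)$, hence $\sigma\,\iota(\tau)\,\sigma^{-1}(x)=x$; for $x\in Y$ the same computation shows $\sigma\,\iota(\tau)\,\sigma^{-1}$ agrees on $Y$ with $(\sigma|_Y)\tau(\sigma|_Y)^{-1}$, which belongs to $\Aut_G(Y)$ because $\sigma|_Y\in\Aut_G(Y)$. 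Therefore $\sigma\,\iota(\tau)\,\sigma^{-1}=\iota\bigl((\sigma|_Y)\tau(\sigma|_Y)^{-1}\bigr)\in\iota(\Aut_G(Y))$, proving normality of the image.

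For part~3, consider the restriction map $\rho:\Aut_G(X)_Y\to\Aut_G(Y)$ given by $\rho(\sigma):=\sigma|_Y$. It is well-defined because $\sigma\in\Aut_G(X)_Y$ implies $\sigma(Y)=Y$, and $G$-equivariance of $\sigma$ transfers to $\sigma|_Y$ thanks to the $G$-invariance of $Y$; it is obviously a group homomorphism. Its kernel is precisely the elementwise stabilizer $\Aut_G(X)_{(Y)}$, and surjectivity is free: for any $\tau\in\Aut_G(Y)$ the extension $\iota(\tau)$ constructed in part~1 lies in $\Aut_G(X)_Y$ and satisfies $\rho(\iota(\tau))=\tau$. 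The first isomorphism theorem then gives $\Aut_G(Y)\cong\Aut_G(X)_Y/\Aut_G(X)_{(Y)}$. I do not anticipate any real obstacle; the only point requiring care is that $Y$ being $G$-invariant is used twice (to make $\iota(\tau)$ $G$-equivariant and to make $\rho(\sigma)$ $G$-equivariant), while $\Aut_G(X)$-invariance is the extra hypothesis needed only in part~2 to promote the image of $\iota$ from a subgroup to a normal subgroup.
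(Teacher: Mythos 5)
Your proposal is correct and follows essentially the same route as the paper: the extension-by-identity embedding for parts 1 and 2 (with the conjugation formula $\sigma\,\iota(\tau)\,\sigma^{-1}=\iota\bigl((\sigma|_Y)\tau(\sigma|_Y)^{-1}\bigr)$ for normality) and the restriction homomorphism plus the First Isomorphism Theorem for part 3. Your version is in fact slightly more explicit than the paper's on a couple of points, such as why $\sigma(Y)=Y$ rather than merely $\sigma(Y)\subseteq Y$, and why the restriction map is surjective.
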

\begin{proof}
The monoid $\End_G(Y)$ is embedded in $\End_G(X)$ via the injective homomorphism $\Phi : \End_G(Y) \to  \End_G(X)$ given by
\[ \Phi(\tau) (x) := \begin{cases}
\tau(x) & \text{ if } x \in Y \\
x & \text{ otherwise.}
\end{cases}\] 
It is easy to check that indeed $\Phi(\tau) (x) \in  \End_G(X)$ using the fact that $g \cdot x \in Y$ if and only if $x \in Y$. By restricting $\Phi$ to $\Aut_G(Y)$, we show that $\Aut_G(Y)$ is embedded in $\Aut_G(X)$. 

Now fix $\sigma \in \Aut_G(X)$ and $\tau \in \Aut_G(Y)$, and assume that $Y$ is also $\Aut_G(X)$-invariant. Since $\sigma(y) \in Y$, for all $y \in Y$, then $\sigma \vert_Y \in \Aut_G(Y)$. Therefore $\sigma \Phi(\tau) \sigma^{-1} = \Phi (\sigma \vert_Y \tau \sigma^{-1} \vert_Y) \in \Phi(\Aut_G(Y))$, which shows that $\Phi(\Aut_G(Y))$ is normal in $\Aut_G(X)$. 

Finally, consider the restriction homomorphism $\psi : \Aut_G(X)_{Y} \to \Aut_G(Y)$ given by $\phi(\tau) = \tau \vert_Y$, for all $\tau \in \Aut_G(X)_{Y}$. This is a surjective homomorphism with kernel $\Aut_G(X)_{(Y)}$, so part (3) follows by the First Isomorphism Theorem.   
\end{proof}

We shall introduce notation for three particular $G$-equivariant transformations of $X$. Let $x,y \in X$ with $x \neq y$. 
\begin{enumerate}
\item When $G_x \leq G_y$, define $[x \mapsto y]: X \to X$ by
\begin{equation}\label{collapsing}
 [x \mapsto y](z) := \left\{ \begin{array}{cl} g\cdot y & if\ z=g\cdot x \\ z & \text{otherwise.}  \end{array}  \right.
\end{equation}
\item When $G_x = G_y$ and there exists $k \in G$ such that $y = k \cdot x$, define $\tau_{x,k} : X \to X$ by 
\begin{equation}\label{tranorb}
\tau_{x,k}(z)= \left\{ \begin{array}{cl} g k \cdot x & if \ z=g\cdot x \\ z & \text{otherwise.}  \end{array}  \right. 
\end{equation}
\item When $G_x = G_y$ and $Gx \neq Gy$, define $[x , y] : X \to X$ by
\begin{equation} \label{trans}
 [x , y](z) := \left\{ \begin{array}{cl} g\cdot y & if\ z=g\cdot x \\ g\cdot x & if\ z=g\cdot y \\ z & \text{otherwise.}  \end{array}  \right. 
\end{equation}
\end{enumerate}

Observe that the $G$-equivariant transformations given in (\ref{tranorb}) and (\ref{trans}) are bijections, and $\tau_{x,k}$ is defined if and only if $G_{x} = G_{k \cdot x} = k G_x k^{-1}$, which is true if and only if $k \in N_G(G_x)$. 

For any $\tau \in \End_G(X)$, it is easy to check that $G_x \leq G_{\tau(x)}$ for every $x \in X$. Using this simple fact combined with the previous constructions of $G$-equivariant transformations, we obtain the following result. 

\begin{lemma}\label{lema1}
Let $x,y\in X$. 
\begin{enumerate}
\item There exists $\tau\in \Aut_G(X)$ such that $\tau(x)=y$ if and only if $G_{x}=G_{y}$.

\item If $Gx \neq Gy$, there exists $\tau\in \End_G(X) \setminus \Aut_G(X)$ such that $\tau(x)=y$ if and only if $G_{x} \leq G_{y}$.  
\end{enumerate}
\end{lemma}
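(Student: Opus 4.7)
The plan is to use two main ingredients. The first is the fact noted in the paragraph right before the lemma: every $\tau \in \End_G(X)$ satisfies $G_z \leq G_{\tau(z)}$ for all $z \in X$, since $g \in G_z$ forces $g\cdot \tau(z) = \tau(g \cdot z) = \tau(z)$. The second is the collection of explicit $G$-equivariant transformations introduced in (\ref{collapsing}), (\ref{tranorb}), and (\ref{trans}), together with the remarks made just afterwards about when they are defined and about their bijectivity.

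For the forward direction of (1), I would apply the stabilizer-containment fact to a $\tau \in \Aut_G(X)$ with $\tau(x)=y$ to obtain $G_x \leq G_y$, and then apply the same fact to $\tau^{-1}$ (which is $G$-equivariant, as observed earlier) at the point $y$ to get the reverse inclusion, yielding $G_x = G_y$. For the backward direction, I would split on whether $x$ and $y$ lie in the same orbit. If $Gx = Gy$, then $y = k\cdot x$ for some $k \in G$, and the hypothesis $G_x = G_y = kG_xk^{-1}$ is exactly the condition $k \in N_G(G_x)$ that (as remarked right after the definitions) makes $\tau_{x,k}$ from (\ref{tranorb}) well defined; this $\tau_{x,k}$ is a bijection sending $x$ to $y$, hence lies in $\Aut_G(X)$. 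If instead $Gx \neq Gy$, the transposition $[x,y]$ from (\ref{trans}) is already a bijective $G$-equivariant transformation mapping $x$ to $y$.

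For part (2), the forward direction is immediate from the same stabilizer-containment fact. For the converse, given $Gx \neq Gy$ and $G_x \leq G_y$, I would show that the map $[x\mapsto y]$ defined in (\ref{collapsing}) works. Well-definedness on the orbit $Gx$ is the only point requiring care: if $g\cdot x = g'\cdot x$, then $g^{-1}g' \in G_x \leq G_y$, so $g\cdot y = g'\cdot y$; this is precisely where the hypothesis $G_x \leq G_y$ is used. The assumption $Gx \neq Gy$ ensures that the two branches of the definition do not collide. $G$-equivariance is built into the construction, and the map is not injective because both $x$ and $y$ (lying in distinct orbits) are sent to $y$, so $[x\mapsto y] \in \End_G(X)\setminus \Aut_G(X)$ with $[x\mapsto y](x) = y$. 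The only mildly delicate point in the whole argument is verifying that $[x \mapsto y]$ is well defined on $Gx$; everything else reduces to invoking the constructions and remarks already laid out.
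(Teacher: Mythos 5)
Your proposal is correct and follows exactly the route the paper intends: the paper gives no explicit proof of this lemma, stating only that it follows from the fact that $G_z \leq G_{\tau(z)}$ for all $\tau \in \End_G(X)$ together with the constructions (\ref{collapsing}), (\ref{tranorb}), and (\ref{trans}), which is precisely the argument you carry out. Your verification of well-definedness of $[x \mapsto y]$ on $Gx$ via $G_x \leq G_y$, and your observation that $Gx \neq Gy$ is what forces non-injectivity (both $x$ and $y$ map to $y$), correctly fill in the details the paper leaves implicit.
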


Recall that the action of $G$ on $X$ is \emph{transitive} if there exists a unique $G$-orbit, so $X=Gx$ for any $x \in X$; in other words, if for every $x,y \in X$ there exists $g \in G$ such that $y=g \cdot x$. If the action of $G$ on $X$ is not transitive, Lemma \ref{lema1} shows that $\Aut_G(X)$ is properly contained in $\End_G(X)$. On the other hand, when the action is transitive, the next result shows that $\Aut_G(X) =  \End_G(X)$ if and only if, for any $x \in X$, the stabilizer $G_x$ is not properly contained in any of its conjugates (i.e., $G_x \subseteq g G_x g^{-1}$ implies $G_x = g G_x g^{-1}$). This last condition may seem unusual. In general, if $H$ is a finite, finite-index, or normal subgroup of $G$, then $H$ cannot be properly contained in any of its conjugates; however, there are examples of infinite groups $G$ and infinite-index subgroups $H$ such that $H$ is properly contained in $gHg^{-1}$ for some $g \in G$ (see \cite{K12}).

\begin{proposition}\label{th-transitive}
Suppose that the action of $G$ on $X$ is transitive and let $x \in X$. The subgroup $G_x$ is not properly contained in any of its conjugates if and only if
\[ \End_G(X) = \Aut_G(X). \]
\end{proposition}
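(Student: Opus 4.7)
The plan is to work with the family of candidate $G$-equivariant self-maps $\tau_g : X \to X$ defined on points of the unique orbit by $\tau_g(h \cdot x) := hg \cdot x$, one for each $g \in G$. Because the action is transitive, every $\tau \in \End_G(X)$ satisfies $\tau(x) = g \cdot x$ for some $g$, and then $\tau(h \cdot x) = h \cdot \tau(x) = hg \cdot x$, so $\tau = \tau_g$; conversely, any $\tau_g$ that is a well-defined function is automatically $G$-equivariant by a one-line check on $k \cdot (h \cdot x) = kh \cdot x$.

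The proof then reduces to three short coset observations about $\tau_g$. First, $\tau_g$ is well-defined precisely when $G_x \subseteq gG_xg^{-1}$, since the implication $h_1 \cdot x = h_2 \cdot x \Rightarrow h_1 g \cdot x = h_2 g \cdot x$ unpacks to $h_1^{-1} h_2 \in G_x \Rightarrow h_1^{-1} h_2 \in gG_xg^{-1}$. Second, whenever well-defined, $\tau_g$ is surjective because $h \cdot x = \tau_g(hg^{-1} \cdot x)$. Third, $\tau_g$ is injective if and only if $gG_xg^{-1} \subseteq G_x$, by the same coset computation run in reverse. Combining the three, $\tau_g$ is a bijection if and only if $G_x = gG_xg^{-1}$.

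With this in hand, both directions are immediate. For $(\Rightarrow)$, assuming $G_x$ is not properly contained in any conjugate, take any $\tau \in \End_G(X)$ and write $\tau = \tau_g$; well-definedness of $\tau$ gives $G_x \subseteq gG_xg^{-1}$, and the hypothesis upgrades this to equality, so $\tau$ is a bijection and lies in $\Aut_G(X)$. For $(\Leftarrow)$, I would argue contrapositively: if $G_x \lneq gG_xg^{-1}$ for some $g$, then $\tau_g$ is a well-defined $G$-equivariant surjection; picking any $h_0 \in gG_xg^{-1} \setminus G_x$, we get $\tau_g(x) = g \cdot x = h_0 g \cdot x = \tau_g(h_0 \cdot x)$ while $h_0 \cdot x \neq x$, so $\tau_g \in \End_G(X) \setminus \Aut_G(X)$.

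The only mild subtlety is that Lemma \ref{lema1}(2) cannot be invoked in the transitive setting, because its hypothesis $Gx \neq Gy$ fails; the non-bijective witness in the $(\Leftarrow)$ direction must therefore be built by hand. Once the parametrization $\tau \leftrightarrow g$ is in place, however, everything reduces to tracking which conjugate of $G_x$ contains which coset, and no deeper input is needed.
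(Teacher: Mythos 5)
Your proof is correct and follows essentially the same route as the paper's: the key fact in both is that $G_{\tau(x)}=gG_xg^{-1}\supseteq G_x$ for $\tau(x)=g\cdot x$, which the hypothesis upgrades to equality, and your non-injective witness $\tau_g$ in the converse is exactly the map $\tau(h\cdot x):=hg\cdot x$ constructed in the paper. The explicit parametrization $\tau\leftrightarrow\tau_g$ and the observation that Lemma \ref{lema1}(2) does not apply here are both sound, but they repackage rather than change the argument.
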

\begin{proof}
Assume that $G_x$ is not properly contained in any of its conjugates. Fix $\tau \in \End_G(X)$. It follows by transitivity and $G$-equivariance that $\tau$ must be surjective. Suppose that $\tau(x) = \tau(y)$, for some $x,y \in G$. By transitivity, there exists $g \in G$ such that $y = g \cdot x$. Then $\tau(x) = \tau(g \cdot x) = g \cdot \tau(x)$, implies that $g \in G_{\tau(x)}$. By the $G$-equivarance of $\tau$, we have $G_x \leq G_{\tau(x)}$, and again by transitivity there exists $h \in G$ such that $G_{\tau(x)} = h G_x h^{-1}$. Hence, by hypothesis, $G_x = G_{\tau(x)}$, so $g \in G_x$. Hence, $y = g \cdot x = x$, which shows that $\tau$ is injective. 

Conversely, suppose that there exists $x \in X$ and $h \in G$ such that $G_x < h G_x h^{-1}$. Define $\tau : Gx \to Gx$ as follows:
\[ \tau( g \cdot x) := gh \cdot x, \quad \forall g \in G.  \]
It is clear that $\tau \in \End_G(X)$, but we shall show that $\tau$ is not injective. Consider $k \in (h G_x h^{-1}) - G_x$. Then $k \cdot x \neq x$. However, $k \in h G_x h^{-1} = G_{h\cdot x}$ implies that
\[  k \cdot (h\cdot x) = h \cdot x \ \Leftrightarrow \ \tau(k \cdot x) = \tau(x).   \]
Therefore, $\Aut_G(X)$ is properly contained in $\End_G(X)$. 
\end{proof}

\begin{example}
Let $G$ be an infinite group such that there exists a subgroup $K \leq G$ and $h \in G$ with $K < h K h^{-1}$. In this situation, consider the shift action of $G$ on $A^{G}$, where $A$ is a set with at least two elements. Without loss, assume that $\{0,1\} \subseteq A$. Let $x := \chi_K \in A^G$ be the \emph{indicator function} of $K$ defined by
\[ \chi_K (g) = \begin{cases}
1 & \text{ if } g \in K \\
0 & \text{ otherwise.}
\end{cases} \]
It is easy to check that $G_x = K$, and so, by Proposition \ref{th-transitive}, $\Aut_G( Gx)$ is properly contained in $\End_G(Gx)$.    
\end{example}

\subsection{Structure of $\Aut_G(X)$}

The next result is well-known (for example, see \cite[Prop. 1.8]{Homo}).

\begin{lemma}\label{lemmaAut-orbit}
If the action of $G$ on $X$ is transitive, then
\[  \Aut_G(X) \cong N_G(G_x)/G_x, \text{ for any } x \in X. \]
\end{lemma}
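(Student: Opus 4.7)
The plan is to construct an explicit isomorphism between $\Aut_G(X)$ and $N_G(G_x)/G_x$ by fixing $x \in X$ and sending each coset $kG_x$ to a $G$-equivariant bijection of $X$ determined by its value at $x$. Since the action is transitive, $X = Gx$, and the map $\tau_{x,k}$ from (\ref{tranorb}) simplifies to $\tau_{x,k}(g\cdot x) = gk \cdot x$ for all $g \in G$, whenever $k \in N_G(G_x)$. I would define $\Phi : N_G(G_x) \to \Aut_G(X)$ by $\Phi(k) := \tau_{x, k^{-1}}$; the inverse on $k$ is included to convert the natural anti-homomorphism $k \mapsto \tau_{x,k}$ into a genuine homomorphism.

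Verifying that $\Phi(k)$ is a well defined $G$-equivariant bijection reduces to checking that if $g \cdot x = g' \cdot x$, i.e.\ $g^{-1}g' \in G_x$, then $k^{-1}(g^{-1}g')k \in G_x$ (which holds because $k \in N_G(G_x)$), so that $gk^{-1} \cdot x = g'k^{-1}\cdot x$; $G$-equivariance is immediate from the formula, and $\Phi(k^{-1})$ serves as a two-sided inverse for $\Phi(k)$. A direct computation gives $\Phi(k_1 k_2)(g\cdot x) = g k_2^{-1} k_1^{-1}\cdot x = \Phi(k_1)\Phi(k_2)(g\cdot x)$, so $\Phi$ is a homomorphism. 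Evaluating at $x$ shows that $\Phi(k) = \id$ forces $k^{-1}\in G_x$, so $\ker \Phi = G_x$.

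For surjectivity, given any $\sigma \in \Aut_G(X)$, transitivity yields some $k \in G$ with $\sigma(x) = k^{-1}\cdot x$. Applying the basic fact $G_y \leq G_{\tau(y)}$ (for any $\tau \in \End_G(X)$) to both $\sigma$ and its $G$-equivariant inverse $\sigma^{-1}$ yields $G_x = G_{\sigma(x)} = k^{-1}G_x k$, so $k \in N_G(G_x)$. Since $\Phi(k)$ and $\sigma$ are $G$-equivariant maps that agree at $x$, they agree on all of $X = Gx$, so $\sigma = \Phi(k)$. The First Isomorphism Theorem then delivers the claimed isomorphism. The only subtle point I anticipate is the inverse convention in the definition of $\Phi$, which is needed to obtain a homomorphism rather than an anti-homomorphism; everything else is a routine unraveling of definitions.
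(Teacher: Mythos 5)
Your proof is correct and follows essentially the same route as the paper, which simply asserts that $k \mapsto \tau_{x,k}$ is a surjective homomorphism from $N_G(G_x)$ onto $\Aut_G(X)$ with kernel $G_x$. Your observation that the unadorned map $k \mapsto \tau_{x,k}$ satisfies $\tau_{x,k_1}\tau_{x,k_2} = \tau_{x,k_2k_1}$ and is therefore only an anti-homomorphism (so one should either insert an inverse, as you do, or invoke the isomorphism of a group with its opposite) is a legitimate refinement of the paper's one-line argument, and the rest of your verification fills in exactly the details the authors leave implicit.
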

\begin{proof}
Fix $x \in X$. The map $k \mapsto \tau_{x,k}$ from $N_G(G_x)$ to $\Aut_G(X)$, with $\tau_{x,k}$ defined as in Lemma \ref{lema1}, is a surjective homomorphism with kernel $G_x$. 
\end{proof}

Our next goal is to describe the structure of $\Aut_G(X)$ for intransitive actions. 

First of all, we introduce some notation. Define 
\[ \Stabs_G(X) := \{ G_x : x \in X \} \ \text{ and }  \  \Conj_G(X) := \{[G_x] : x \in X \}.\]
For each $[H] \in \Conj_G(X)$, define the set
\[ \mathcal{B}_{[H]}:=\{x\in X :\ [G_{x}]=[H]\}. \]
Since $G_{g \cdot x} = gG_xg^{-1}$, for every $g \in G$, $x \in X$, the set $\mathcal{B}_{[H]}$ is $G$-invariant. Moreover, by Lemma \ref{lema1}, $\mathcal{B}_{[H]}$ is also $\Aut_G(X)$-invariant, so, by Lemma \ref{le-invariant}, $\Aut_G(\mathcal{B}_{[H]})$ is isomorphic to a normal subgroup of $\Aut_G(X)$. The number of $\Aut_G(X)$-orbits inside $\mathcal{B}_{[H]}$ is $\vert [H]\vert = [G : N_{G}(H)]$.

By the Orbit-Stabilizer Theorem, the cardinality of each $G$-orbit inside $\mathcal{B}_{[H]}$ is the index $[G:H]$. Let $\mathcal{O}_{[H]}$ be the set of $G$-orbits contained in $\mathcal{B}_{[H]}$:
\[ \mathcal{O}_{[H]} := \{Gx \subseteq X :\ [G_{x}]=[H], x \in X \}, \text{ and let } \alpha_{[H]} := \vert \mathcal{O}_{[H]}  \vert. \]

\begin{example}
For the shift action of $G$ on $A^G$, the values of $\alpha_{[H]}$ may be calculated using the lattice of finite-index subgroups $L(G)$ of $G$ and its corresponding M\"{o}bius function $\mu : L(G) \times L(G) \to \mathbb{Z}$; see \cite{CS21} for details. 
\end{example}

\begin{example}
Consider the shift action of $\mathbb{Z}_4$ on $\{0,1 \}^{\mathbb{Z}_4}$, where we represent a function $x \in \{0,1 \}^{\mathbb{Z}_4}$ by the $4$-tuple $(x(0), x(1), x(2), x(3))$. In this case, $\Stabs_G(X) = \{ H_1 := \langle 1 \rangle, H_2 :=  \langle 2 \rangle , H_3 := \langle 0 \rangle \}$. Figure \ref{fig} shows all the elements of $\{0,1 \}^{\mathbb{Z}_4}$ partitioned by $\mathbb{Z}_4$-orbits (depicted by ellipses) and by the sets $\mathcal{B}_{[H_i]}$ (depicted by rectangles). In this case, we directly see that $\alpha_{[H_1]} =2$, $\alpha_{H_2} = 1$ and $\alpha_{[H_3]} = 3$. 
\begin{figure}[h]
 \centering
\includegraphics[scale=0.5]{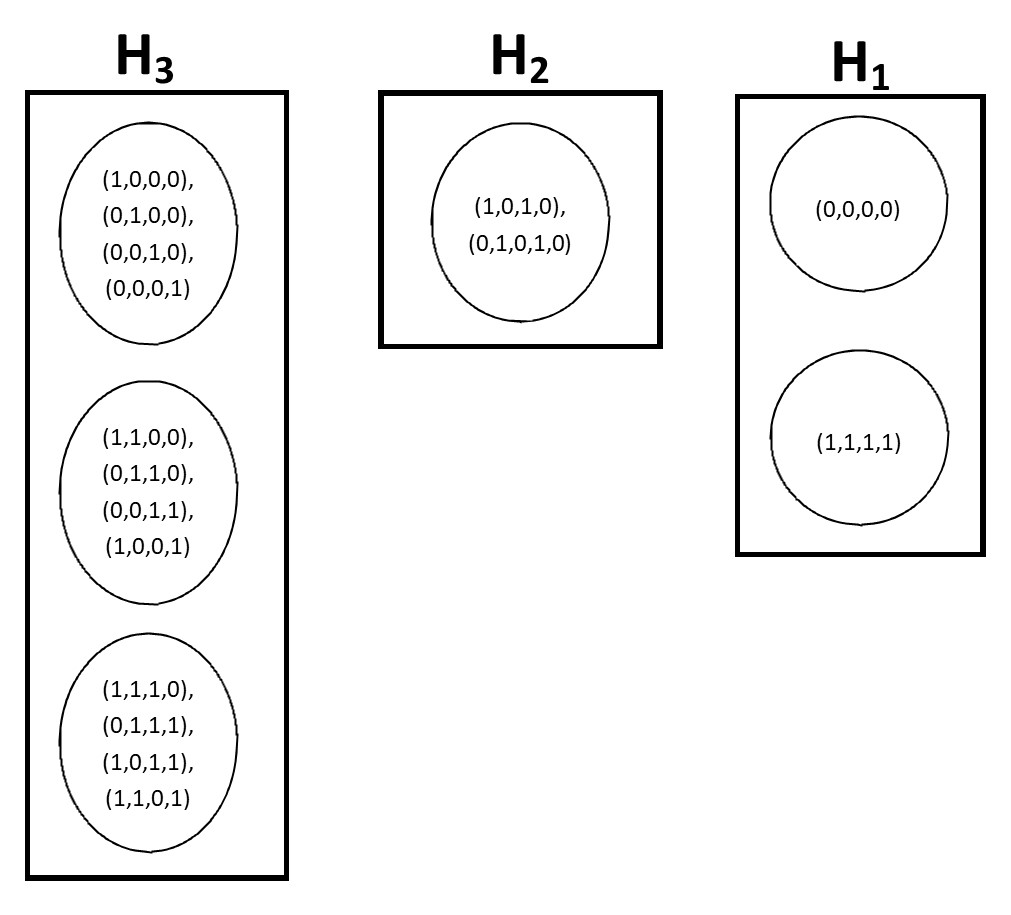}
\caption{Partitions for the shift action of $\mathbb{Z}_4$ on $\{0,1 \}^{\mathbb{Z}_4}$}
\label{fig}
\end{figure}
\end{example}


We briefly recall the definition of wreath product of groups. For a group $K$, a set $\Delta$ and a group $H$ acting on $\Delta$, the \emph{(unrestricted) wreath product} of $K$ by $H$, denoted by $K \wr_{\Delta} H$ is the semidirect product $K^{\Delta} \rtimes_\phi H$, where $K^\Delta$ is the direct product of $G$ indexed by $\Delta$ and homomorphism $\phi : H \to \Aut(K^\Delta)$ given by $\phi(h)(f)(\delta) = f(h \cdot \delta)$, for all $h \in H, f \in K^\Delta, \delta \in \Delta$ (see \cite[p. 105]{PS} for more details). In our results, we shall always consider the case when $H=\Sym(\Delta)$ and the action of $H$ on $\Delta$ is the natural action by evaluation, so, in order to simplify notation, we write $K \wr \Sym(\Delta)$ instead of $K \wr_{\Delta} \Sym(\Delta)$.   

For any set $A$, let $\Tran(A)$ be the \emph{full transformation monoid} of $A$, i.e. the set of all self-maps of $A$ equipped with composition. When $A$ is finite, it is well-known that $\Tran(A)$ is generated by a generating set of $\Sym(A)$ together with a map with image size $\vert A \vert - 1$ (e.g., see \cite[Ch. 3]{Gany}). Analogously to the case of groups, we may define a wreath product of monoids (see \cite{Ara}).

The following result uses the Imprimitive Wreath Product Embedding Theorem, which, as stated in \cite[Theorem 5.5]{PS}, involves a partition with finitely many blocks; however, this assumption may be dropped and the same proof of the theorem works by using the axiom of choice.

\begin{proposition} \label{le-box}
Let $G$ be a group acting on a set $X$, 
\begin{enumerate}
\item For any $H \in \Stabs_G(X)$,
\[  \Aut_G(\mathcal{B}_{[H]}) \cong \left( N_G(H)/H \right) \wr \Sym(\mathcal{O}_{[H]}).   \]

\item 
\[  \Aut_G(X) \cong \prod_{[H] \in \Conj_G(X) } ((N_G(H)/H) \wr \Sym(\mathcal{O}_{[H]}) ).  \]

\item Suppose that $H \in \Stabs_G(X)$ is not properly contained in any of its conjugates. Then, 
\[  \End_G(\mathcal{B}_{[H]}) \cong \left( N_G(H)/H \right) \wr \Tran(\mathcal{O}_{[H]}).   \]
\end{enumerate}
\end{proposition}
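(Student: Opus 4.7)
The plan is to handle the three parts in sequence, using the Imprimitive Wreath Product Embedding Theorem as the main tool for (1) and (3), and a simple gluing argument for (2). For part (1), I would apply the theorem to $\Aut_G(\mathcal{B}_{[H]})$ acting on $\mathcal{B}_{[H]}$, taking as the invariant partition the decomposition of $\mathcal{B}_{[H]}$ into its $G$-orbits $\mathcal{O}_{[H]}$; this partition is $\Aut_G(\mathcal{B}_{[H]})$-invariant because $G$-equivariant maps send $G$-orbits to $G$-orbits. The theorem produces an embedding of $\Aut_G(\mathcal{B}_{[H]})$ into $\Aut_G(O) \wr \Sym(\mathcal{O}_{[H]})$ for any chosen block $O \in \mathcal{O}_{[H]}$, and $\Aut_G(O) \cong N_G(H)/H$ by Lemma \ref{lemmaAut-orbit}. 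To upgrade this to an isomorphism, I would exhibit enough automorphisms to hit both factors: for the top factor, given any permutation $\pi$ of $\mathcal{O}_{[H]}$, choose for each orbit $O$ a basepoint $y_O \in O$ with $G_{y_O} = H$ and define $\tau(g \cdot y_O) := g \cdot y_{\pi(O)}$, which is well-defined and $G$-equivariant since $G_{y_O} = G_{y_{\pi(O)}} = H$. Surjectivity onto the base group then follows from the analogous construction, replacing the uniform choice $y_{\pi(O)}$ by an independently chosen $N_G(H)$-translate $k_O \cdot y_O$ on each orbit, where $k_O H \in N_G(H)/H$ can be prescribed freely.

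For part (2), I would use that $X = \bigsqcup_{[H] \in \Conj_G(X)} \mathcal{B}_{[H]}$, that each $\mathcal{B}_{[H]}$ is both $G$-invariant (by definition) and $\Aut_G(X)$-invariant (by Lemma \ref{lema1}), and then consider the product of the restriction homomorphisms $\Aut_G(X) \to \prod_{[H]} \Aut_G(\mathcal{B}_{[H]})$. Injectivity is immediate from the partition, and surjectivity follows because any tuple of $G$-equivariant automorphisms on the pieces glues to a global $G$-equivariant automorphism of $X$. Combined with part (1), this yields the claimed direct product of wreath products.

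Part (3) mirrors part (1), but with $\Tran$ in place of $\Sym$. The essential preliminary step is to use the hypothesis on $H$ to show that every $\tau \in \End_G(\mathcal{B}_{[H]})$ satisfies $G_{\tau(x)} = G_x$ for all $x \in \mathcal{B}_{[H]}$: the containment $G_x \leq G_{\tau(x)}$ is a direct consequence of $G$-equivariance, and since both subgroups lie in the conjugacy class $[H]$, the hypothesis that $H$ is not properly contained in any of its conjugates forces equality. Once this is established, such a $\tau$ induces a (possibly noninjective) transformation $\bar{\tau} : \mathcal{O}_{[H]} \to \mathcal{O}_{[H]}$, and on each orbit $\tau$ acts as a $G$-set isomorphism onto its image orbit, parametrized by an element of $N_G(H)/H$ via the same basepoint-and-translate construction used in part (1). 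The main obstacle I anticipate is a clean formulation of the monoid wreath product $(N_G(H)/H) \wr \Tran(\mathcal{O}_{[H]})$ in the spirit of \cite{Ara}, together with the verification that the parametrization of endomorphisms by pairs in $(N_G(H)/H)^{\mathcal{O}_{[H]}} \times \Tran(\mathcal{O}_{[H]})$ respects the monoid operation under composition.
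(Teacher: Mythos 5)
Your proposal follows essentially the same route as the paper: the Imprimitive Wreath Product Embedding Theorem applied to the orbit partition of $\mathcal{B}_{[H]}$ for part (1), with surjectivity onto the full wreath product obtained by the same pick-orbit-representatives construction, the restriction-and-gluing isomorphism onto the direct product for part (2), and for part (3) the observation that the hypothesis on $H$ forces $G_{\tau(x)}=G_x$ (which is exactly the content the paper extracts from Proposition \ref{th-transitive}). The argument is correct, and your treatment of part (3) is in fact more explicit than the paper's one-line reduction.
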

\begin{proof}
\begin{enumerate}
\item The set $\mathcal{O}_{[H]}$ of $G$-orbits inside $\mathcal{B}_{[H]}$ forms a $G$-invariant partition of $\mathcal{B}_{[H]}$, with $\vert \mathcal{O}_{[H]} \vert = \alpha_{[H]}$. Using the axiom of choice, it is easy to see that the induced action of $\Aut_G(\mathcal{B}_{[H]})$ on $\mathcal{O}_{[H]}$ is isomorphic to the whole symmetric group $\Sym(\mathcal{O}_{[H]})$ (pick representatives of $G$-orbits in order to define corresponding $G$-equivariant transformations). By \cite[Theorem 5.5]{PS}) and Lemma \ref{le-invariant}, $\Aut_G(\mathcal{B}_{[H]})$ is permutationally isomorphic to a subgroup $R$ of $\Aut_G(Gx) \wr \Sym(\mathcal{O}_{[H]})$, with $Gx \in \mathcal{O}_{[H]}$. The kernel of the projection of $R$ to $\Sym(\mathcal{O}_{[H]})$ is isomorphic to the direct product $\Aut(Gx)^{\mathcal{O}_{[H]}}$. Therefore,
\[ R= \Aut_G(Gx) \wr \Sym(\mathcal{O}_{[H]}),  \] 
and the result follows by Lemma \ref{lemmaAut-orbit}.

\item For each $\tau \in \Aut_G(X)$, we define a function 
\[ F : \Aut_G(X) \to \prod_{[H] \in \Conj_G(X) }\Aut_G(\mathcal{B}_{[H]}) \]
by $F(\tau)_{[H]} := \tau \vert_{\mathcal{B}_{[H]}}$ for every $\tau \in \Aut_G(X)$. The function $F$ is injective because $\{ \mathcal{B}_{[H]} : [H] \in \Conj_G(X)  \}$ is a  partition of $X$. To show that $F$ is surjective, observe that for any $( \tau_{[H]} : [H] \in \Conj_G(X))$ in the product, we may define $\tau \in \Aut_G(X)$ by $\tau(x) = \tau_{[H]}(x)$ if and only if $x \in \mathcal{B}_{[H]}$. It follows that $\tau$ is indeed $G$-equivariant as, for every $[H] \in \Conj_G(X)$, $\tau_{[H]}$ is $G$-equivariant and $\mathcal{B}_{[H]}$ is $G$-invariant. Finally, $F$ is a homomorphism since $\mathcal{B}_{[H]}$ is $\Aut_G(X)$-invariant so $(\tau \circ \sigma)\vert_{\mathcal{B}_{[H]}} = \tau\vert_{\mathcal{B}_{[H]}} \circ \sigma \vert_{\mathcal{B}_{[H]}}$, for every $\tau, \sigma \in \Aut_G(X)$. The result follows by Lemma \ref{le-box}.

\item With the help of Proposition \ref{th-transitive}, this follows by a very similar argument as in the proof of the first point. 
\end{enumerate}
\end{proof}


\section{The relative rank of $\End_G(X)$ modulo $\Aut_G(X)$}

For any monoid $M$ and $U \subseteq M$, we say that a subset $W$ of $M$ \emph{generates $M$ modulo} $U$ if $\langle W \cup U \rangle = M$. 

For this section, assume that $G$ is a finite group acting on a finite set $X$. We shall assume that $[H_1], [H_2], \dots, [H_r]$ is the list of different conjugacy classes of subgroups in $\Conj_G(X)$. Define $\mathcal{B}_i := \mathcal{B}_{[H_i]}$ and $\alpha_i := \alpha_{[H_i]}$. For $n \in \mathbb{N}$, denote $[n] := \{1 , 2 , \dots, n \}$.

\begin{lemma}\label{le-gen}
The set
\[W := \left\{ [ x \mapsto y] : x,y \in X, Gx \neq Gy, G_x \leq G_y \right\} \]
generates $\End_G(X)$ modulo $\Aut_G(X)$
\end{lemma}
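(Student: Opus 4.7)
The approach is induction on $m(\tau) := |\{Gx \in X/G : \tau(Gx) \neq Gx\}|$, the number of $G$-orbits that $\tau$ does not preserve setwise. The key observation motivating this measure is that if $m(\tau) = 0$ then for each orbit $Gx$ the restriction $\tau|_{Gx}$ is a $G$-equivariant self-map of the transitive $G$-set $Gx$, and since $G$ is finite the hypothesis of Proposition \ref{th-transitive} is automatic, so $\tau|_{Gx}$ is bijective, forcing $\tau \in \Aut_G(X)$. Consequently the base case $m(\tau) = 0$ (and more generally the case $\tau \in \Aut_G(X)$) is trivial.

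For the inductive step, assume $\tau \notin \Aut_G(X)$. Then $\tau$ is not bijective, and since $\tau(X)$ is a $G$-invariant proper subset of the finite set $X$, there exists a $G$-orbit $Gx_0 \subseteq X \setminus \tau(X)$. Set $y_0 := \tau(x_0)$; because $Gy_0$ lies in the image we have $Gy_0 \neq Gx_0$, and $G_{x_0} \leq G_{y_0}$ by $G$-equivariance, so $w := [x_0 \mapsto y_0] \in W$. I would then define $\sigma \in \End_G(X)$ by $\sigma(z) = z$ for $z \in Gx_0$ and $\sigma(z) = \tau(z)$ otherwise; this is $G$-equivariant since $Gx_0$ and its complement are both $G$-invariant. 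A short direct check, crucially using that $\tau(z) \notin Gx_0$ for all $z$ (because $Gx_0 \not\subseteq \tau(X)$), gives $\tau = w \circ \sigma$. Moreover $m(\sigma) = m(\tau) - 1$, because $Gx_0$ becomes a fixed orbit of $\bar\sigma$ while every other orbit behaves as under $\bar\tau$. Applying the inductive hypothesis to $\sigma$ yields $\sigma \in \langle W \cup \Aut_G(X) \rangle$, and hence so is $\tau = w \sigma$.

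The main obstacle is really the existence step underlying the inductive construction: showing that for every $\tau \notin \Aut_G(X)$ there is an orbit missing from $\tau(X)$. This is what allows the very clean factorization $\tau = w\sigma$ with $\sigma$ chosen to fix the omitted orbit pointwise and to agree with $\tau$ elsewhere, without any modification being needed on orbits that are already in $\tau(X)$. The observation above shows that this existence is essentially immediate from Proposition \ref{th-transitive} together with the finiteness of $X$, after which the remaining tasks (defining $\sigma$, verifying $G$-equivariance and the identity $\tau = w \sigma$, and confirming $m(\sigma) = m(\tau) - 1$) are all routine, so the induction closes.
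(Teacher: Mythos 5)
Your proof is correct, and it takes a genuinely different route from the paper's. The paper orders the classes $[H_1],\dots,[H_r]$ by decreasing stabilizer size, factors $\tau$ as a composition of maps each supported on a single block $\mathcal{B}_i$, splits each of these into the part that stays inside $\mathcal{B}_i$ and the part that leaves it, and disposes of the inside part via the wreath-product description $\End_G(\mathcal{B}_i)\cong \Aut_G(Gx)\wr\Tran(\mathcal{O}_{[H_i]})$ of Proposition \ref{le-box} together with the classical fact that $\Tran(A)$ is generated by $\Sym(A)$ and one map of image size $\vert A\vert-1$ \cite{HRH98}. You instead run a single induction on the number $m(\tau)$ of $G$-orbits not fixed setwise, peeling off one collapsing $[x_0\mapsto y_0]$ at a time for an orbit $Gx_0$ omitted from the image. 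The steps all check out: when $\tau$ is not invertible, $X\setminus\tau(X)$ is nonempty and $G$-invariant, so such an orbit exists; the identity $\tau=w\circ\sigma$ holds on $Gx_0$ by $G$-equivariance and off $Gx_0$ because $\tau(z)\notin Gx_0$; and the base case $m(\tau)=0$ correctly reduces to Proposition \ref{th-transitive}, the hypothesis of which is automatic for finite stabilizers. Your argument is more elementary and self-contained (no wreath products, no external generation result for $\Tran(A)$), and it even yields the sharper normal form $\tau=w_1\cdots w_k\,\alpha$ with $w_j\in W$, $\alpha\in\Aut_G(X)$ and $k\le m(\tau)$. What the paper's block-by-block route buys is alignment with the structural decomposition of $\Aut_G(X)$ and $\End_G(\mathcal{B}_i)$ developed in Section 2, which mirrors the case analysis used in the subsequent lower-bound argument; for Lemma \ref{le-gen} in isolation, your proof is shorter.
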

\begin{proof}
Suppose that the list of conjugacy classes is ordered such that
\[ \vert H_1 \vert \geq \vert H_2 \vert \geq \dots \geq \vert H_r \vert.  \]
Fix $\tau \in \End_G(X)$ and for each $i \in [r]$, define $\tau_i \in \End_G(X)$ by
\[ \tau_i(x)  :=\begin{cases}
\tau(x) & \text{ if } x \in \mathcal{B}_i \\
x & \text{ otherwise. }
\end{cases}\]
Each $\tau_i$ is indeed $G$-equivariant as $\mathcal{B}_i$ is $G$-invariant. By Lemma \ref{lema1}, $G_x \leq G_{\tau(x)}$ for all $x \in X$, so $\tau(\mathcal{B}_i) \subseteq \bigcup_{j \leq i} \mathcal{B}_j$. This implies that we have the following factorization of $\tau$
\begin{equation}
 \tau = \tau_r \circ \tau_{r-1} \circ \dots \circ \tau_2 \circ \tau_1,
\end{equation}
which holds since $\tau_i (x)$ is fixed by $\tau_j$, for all $x \in \mathcal{B}_i$, $j > i$. Now fix $i \in [ r]$ and define
\begin{align*}
\mathcal{B}_i^0 & := \{ x \in \mathcal{B}_i :  \tau(x) \in \mathcal{B}_i \}, \\
\mathcal{B}_i^1 & := \{ x \in \mathcal{B}_i : \tau(x) \in \mathcal{B}_j \text{ with } j<i \}.
\end{align*}
For $\epsilon \in \{0,1\}$, it is easy to see that $\mathcal{B}_i^\epsilon$ is a $G$-invariant subset of $X$, so we may define $\tau_i^{\epsilon} \in \End_G(X)$ by 
\[ \tau_i^{\epsilon}(x)  :=\begin{cases}
\tau_i(x) & \text{ if } x \in \mathcal{B}_i^{\epsilon} \\
x & \text{ otherwise. }
\end{cases}\] 
It follows that $\tau_i = \tau_i^{0} \circ \tau_i^{1}$. We finish the proof by showing that $\tau_{i}^\epsilon \in \langle W \cup \Aut_G(X) \rangle$, for $\epsilon \in \{0,1\}$. 
\begin{itemize}
\item \emph{Case $\epsilon = 0 $}. We show that $\tau_i^0 \in \langle W \cup \Aut_G(X) \rangle$. In this case, $\tau_i^0$ is contained in a submonoid of $\End_G(X)$ isomorphic to $\End_G(\mathcal{B}_i)$. If $\alpha_i = 1$, then $\End_G(\mathcal{B}_i) = \End_G(Gx) = \Aut_G(Gx)$, for $x \in \mathcal{B}_i$, so assume that $\alpha_i > 2$. By Proposition \ref{le-box}, $\End_G(\mathcal{B}_i) \cong \Aut_G(Gx) \wr \Tran(\mathcal{O}_{[H_i]})$, for any $x \in \mathcal{B}_i$. The monoid $\Tran( \mathcal{O}_{[H_i]})$ is generated by $\Sym(\mathcal{O}_{[H_i]})$ together with any mapping with image size $\alpha_i - 1$ (see \cite[Prop. 1.2]{HRH98}). For any $x, y \in \mathcal{B}_i$ with $Gx \neq Gy$, the map $[x \mapsto y]$ induces on $\mathcal{O}_{H_i}$ a mapping with image size $\alpha_i-1$. It follows that $\End_G(\mathcal{B}_i)$ is generated by $\Aut_G(\mathcal{B}_i) \cup \{[ x \mapsto y ] \vert_{\mathcal{B}_i} \}$, and the result follows. 

\item \emph{Case $\epsilon = 1 $}. We show that $\tau_i^1 \in \langle W \cup \Aut_G(X) \rangle$. Suppose that $\mathcal{B}_i^1$ is a union of the following $G$-orbits
\[  \mathcal{B}_i^1 = Gx_1 \cup Gx_2 \cup \dots G x_s,  \]
for some $x_1, \dots, x_s \in X$. Write $y_k := \tau^1_i(x_k)$, for $k \in \{1, \dots, s \}$. As $y_k\not\in \mathcal{B}_i$, then $Gx_k \neq Gy_k$. Therefore, we have the factorizacion
\[ \tau_i^1 = [x_1 \mapsto y_1][x_2 \mapsto y_2] \dots [x_s \mapsto y_s] \in \langle W \cup \Aut_G(X) \rangle.  \]
\end{itemize}
\end{proof}

For any $N, H \leq G$, define the $N$-conjugacy class of $H$ by $[H]_N := \{ gHg^{-1} : g \in N\}$. 

\begin{lemma}\label{legen2}
Let $G$ be a finite group acting on a finite set $X$. For each $i \in [r]$, $N_i := N_G(H_i)$, and let $[K_{i,1}]_{N_i}, \dots, [K_{i,r_i}]_{N_i}$ be the list of $N_i$-conjugacy classes such that $K_{i,j} \in \Stabs_G(X)$ and $H_i < K_{i,j}$. We fix some elements of $X$ as follows:
\begin{itemize}
\item For each $i \in [r]$, fix $x_i \in \mathcal{B}_i$ such that $G_{x_i} = H_i$.
\item For each $i \in [r]$ and $j \in [r_i]$, fix $y_{i,j} \in X$ such that $G_{y_{i,j}} = K_{i,j}$. 
\item For each $i \in [r]$ such that $\alpha_i \geq 2$, fix $x_i^\prime \in \mathcal{B}_i$ such that $G_{x_i^\prime} = H_i$ and $Gx_i \neq Gx_i^\prime$. 
\end{itemize}
Then the set 
\[ V :=  \left\{ [x_i \mapsto y_{i,j} ] : i \in [r], j \in [r_i] \right\} \cup \{ [x_i \mapsto x_i^\prime] : i \in [r], \alpha_i \geq 2 \}. \]
generates $\End_G(X)$ modulo $\Aut_G(X)$.   
\end{lemma}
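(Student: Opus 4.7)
The plan is to combine Lemma~\ref{le-gen} with a conjugation argument. By Lemma~\ref{le-gen}, it suffices to show that every $[x\mapsto y]$ with $Gx\neq Gy$ and $G_x\leq G_y$ lies in $\langle V\cup \Aut_G(X)\rangle$. Using the $G$-equivariance of any $\sigma\in\Aut_G(X)$, a short direct calculation yields the conjugation formula
\[ \sigma^{-1}\circ[u\mapsto v]\circ\sigma=[\sigma^{-1}(u)\mapsto\sigma^{-1}(v)], \]
so what I really need is to exhibit, for each $[x\mapsto y]$, some generator $[x_i\mapsto z]\in V$ and some $\sigma\in\Aut_G(X)$ such that $\sigma(x)=g\cdot x_i$ and $\sigma(y)=g\cdot z$ for a common element $g\in G$; the conjugation formula then delivers $[x\mapsto y]=\sigma^{-1}\circ[x_i\mapsto z]\circ\sigma$.

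To choose $z$, let $i$ be the index with $[G_x]=[H_i]$ and pick $h\in G$ with $hH_ih^{-1}=G_x$. Then $K:=h^{-1}G_yh$ is a point stabilizer containing $H_i$. If $K=H_i$, then $G_x=G_y$, which combined with $Gx\neq Gy$ forces $\alpha_i\geq 2$, and I take $z:=x_i'$ and $g:=h$. Otherwise $H_i<K$, so by the definition of the $K_{i,j}$ there exist a unique $j\in[r_i]$ and some $n\in N_i$ with $K=nK_{i,j}n^{-1}$; setting $g:=hn$ gives $gH_ig^{-1}=G_x$ (because $n\in N_i$) and $gK_{i,j}g^{-1}=G_y$, and I take $z:=y_{i,j}$. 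A small verification shows that the $N_i$-conjugacy class of $K$ is independent of the choice of $h$, which is precisely what the $N_i$-conjugacy in the definition of $V$ encodes.

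For the simultaneous realisation of $\sigma(x)=g\cdot x_i$ and $\sigma(y)=g\cdot z$, each equation is individually solvable by Lemma~\ref{lema1}(1) because $G_{g\cdot x_i}=G_x$ and $G_{g\cdot z}=G_y$. Existence of a common $\sigma$ then follows from the structure theorem in Proposition~\ref{le-box}: the decomposition $\Aut_G(X)\cong\prod_{[H]}(N_G(H)/H)\wr\Sym(\mathcal{O}_{[H]})$ lets me prescribe $\sigma$ independently on each block $\mathcal{B}_{[H]}$. In the case $H_i<K$, the two conditions live in different blocks $\mathcal{B}_i$ and $\mathcal{B}_{[K_{i,j}]}$, so they fully decouple. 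In the case $G_x=G_y$, both conditions live in $\mathcal{B}_i$ and must be realised inside the single wreath product $(N_G(H_i)/H_i)\wr\Sym(\mathcal{O}_{[H_i]})$, using the $\Sym$-factor to route the orbits $Gx,Gy$ onto $Gx_i,Gx_i'$ and the $N_G(H_i)/H_i$-factor for the within-orbit adjustments.

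The main obstacle I anticipate is this last construction when coincidences occur among the four orbits $Gx,Gy,Gx_i,Gx_i'$, since then the orbit routing and the within-orbit data interact. The key point that unlocks the construction is that any two elements of $X$ with the same stabilizer differ by an element of its normalizer, so the required within-orbit adjustment always lies in $N_G(H_i)/H_i$, and a brief case analysis on the pattern of coincidences completes the proof.
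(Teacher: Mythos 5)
Your proposal is correct and reaches the same destination as the paper, but it packages the key step differently. Both arguments start from Lemma~\ref{le-gen}, reduce to $G_x = H_i$, and use an element $n \in N_i$ to replace $G_y$ by the chosen representative $K_{i,j}$ of its $N_i$-class; the divergence is in how the generator is transported. The paper writes explicit factorizations of $[x\mapsto y]$ through the transpositions $[\,\cdot\,,\cdot\,]$ of (\ref{trans}) and the translations $\tau_{x,k}$ of (\ref{tranorb}), split into four subcases according to whether $Gx = Gx_i$ and $Gy = Gx_i'$. You instead prove the single identity $\sigma^{-1}\circ[u\mapsto v]\circ\sigma=[\sigma^{-1}(u)\mapsto\sigma^{-1}(v)]$ and reduce everything to producing one $\sigma\in\Aut_G(X)$ with $\sigma(x)=g\cdot x_i$ and $\sigma(y)=g\cdot z$. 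This is more uniform and, in fact, more robust: the paper's subcases are indexed only by the coincidences $Gx = Gx_i$ and $Gy = Gx_i'$, and its displayed products require modification when, say, $Gx = Gx_i'$, whereas your existence argument for $\sigma$ is insensitive to which of the four orbits happen to coincide. Two small points to tighten. First, the sentence ``any two elements of $X$ with the same stabilizer differ by an element of its normalizer'' is only true for elements in the \emph{same} orbit (elements in distinct orbits differ by no group element at all); that same-orbit version is exactly what makes the prescription $g\cdot x\mapsto g\cdot u$ well defined when $Gx = Gu$. Second, the closing ``brief case analysis'' should be carried out, but it is genuinely routine: the two prescriptions $\sigma(x)=g\cdot x_i$ and $\sigma(y)=g\cdot z$ sit on the distinct source orbits $Gx\neq Gy$ with distinct target orbits, each individual map $h\cdot x\mapsto h\cdot(g\cdot x_i)$ is well defined because the stabilizers agree, and $\sigma$ extends to all of $\mathcal{B}_i$ by pairing off the remaining $\alpha_i-2$ orbits of $\mathcal{O}_{[H_i]}$ via arbitrary equivariant bijections (choosing in each orbit a point whose stabilizer is exactly $H_i$), and by the identity outside $\mathcal{B}_i$. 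With those two clarifications your argument is complete.
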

\begin{proof}
By Lemma \ref{le-gen}, it is enough to show that any $[x \mapsto y]$, with $x,y \in X$, $Gx \neq Gy$, $G_x \leq G_y$, may be expressed as a product of elements of $V$ and $\Aut_G(X)$. Without loss, assume that $[G_x] = [H_i]$. Then, there exists $g \in G$ such that $G_{g \cdot x}=gG_x g^{-1} = H_i$; since $[x \mapsto y ] = [ (g\cdot x) \mapsto (g \cdot y)]$, we may assume that $G_x = H_i$. There are two cases to consider.
\begin{itemize}
\item \emph{Case $G_x = G_y$}. Note that $G_x = H_i = G_{x_i} = G_{x_i^\prime} = G_y$. We have four subcases:
\begin{itemize}
\item \emph{Case $Gx \neq Gx_i$ and $Gy \neq Gx_i^\prime$}. Here we have the factorization
\[ [x \mapsto y] = [ y,x_i^\prime][x, x_i]  [x_i \mapsto x_i^\prime] [x, x_i] [ y,x_i^\prime] \in \langle V \cup \Aut_G(X) \rangle.  \]

\item \emph{Case $Gx = Gx_i$ and $Gy \neq Gx_i^\prime$}. There exists $h \in G$ with $x_i = h \cdot x$. As $G_x = G_{x_i}$, we may define $\tau_{x,h} \in \Aut_G(X)$ as in the proof of Lemma \ref{lema1}. Then
\[ [x \mapsto y] = [x_i \mapsto y ] \tau_{x, h}, \]
which holds since for every $g \in G$,
\[ [x_i \mapsto y ] \tau_{x,h}( g \cdot x) = [x_i \mapsto y ] ( gh \cdot x) = [x_i \mapsto y ] ( g \cdot x_i) = g \cdot y.  \]
As $Gy \neq Gx_i^\prime$, then 
\[ [x \mapsto y] =[ y,x_i^\prime] [x_i \mapsto x_i^\prime ] [ y,x_i^\prime] \tau_{x, h} \in \langle V \cup \Aut_G(X) \rangle.  \]

\item \emph{Case $Gx \neq Gx_i$ and $Gy = Gx_i^\prime$}. There exists $k \in G$ with $y = k \cdot x_i^\prime$. Observe that
\[  [x \mapsto y] = \tau_{x_i^\prime,k} [x \mapsto x_i^\prime] \tau_{x_i^\prime, k^{-1}}. \]
Therefore,
\[ [x \mapsto y]  = \tau_{x_i^\prime,k} [x, x_i ] [x_i \mapsto x_i^\prime] [x,x_i] \tau_{x_i^\prime, k^{-1}} \in \langle V \cup \Aut_G(X) \rangle. \]

\item \emph{Case $Gx = Gx_i$ and $Gy = Gx_i^\prime$}. With the notation of the above cases,
\[ [x \mapsto y ] =  \tau_{x_i^\prime,k} [x_i \mapsto x_i^\prime] \tau_{x, h}\tau_{x_i^\prime,k^{-1}} \in \langle V \cup \Aut_G(X) \rangle.  \]
\end{itemize}

\item \emph{Case $G_x <  G_y$}. There exists $K_{i,j}= G_{y_{i,j}}$ such that $[K_{i,j}]_{N_i} = [G_y]_{N_i}$, with $N_i:= N_G(H_i)$. Thus, there exists $n \in N_i$ such that 
\[ K_{i,j} = n G_y n^{-1} = G_{z}, \]
where $z:= n \cdot y \in X$. Now, $G_x = G_{n \cdot x}$ because $n \in N_i$, and we may define $\tau_{x, n} \in \Aut_G(X)$. Hence,
\[[x \mapsto y] = [ (n^{-1} \cdot x) \mapsto y] \tau_{x, n^{-1}} =  [x \mapsto (n \cdot y) ]\tau_{x, n^{-1}} = [x \mapsto z]  \tau_{x, n^{-1}} . \] 
This shows that $[x \mapsto y] \in \langle V \cup \Aut_G(X) \rangle$ if and only if $[x \mapsto z] \in \langle V \cup \Aut_G(X) \rangle$, with $G_z = K_{i,j}  =  G_{y_{i,j}}$. Again there are four subcases, which are done analogously as in the previous case. For example, for the subcase $Gx \neq Gx_i$ and $Gz \neq Gy_{i,j}$, we obtain the factorization
\[ [x \mapsto z ] = [x, x_{i}]  [z, y_{i,j}] [x_i \mapsto y_{i,j} ] [z, y_{i,j}] [x, x_i] \in  \langle V \cup \Aut_G(X) \rangle . \]
\end{itemize}
\end{proof}

With the notation of the previous lemma, define 
\[ U(H_i) :=  \{ [K]_{N_i} : K \in \Stabs_G(X), H_i \leq K \} \text{ and } \kappa_G(X) := \vert \{ i : \alpha_{i} = 1 \} \vert. \]

\begin{corollary}\label{cor-low}
With the previous notation,
\[  \Rank( \End_G(X) : \Aut_G(X) ) \leq \vert V \vert = \sum_{i=1}^r \vert U(H_i) \vert  - \kappa_G(X).  \]
\end{corollary}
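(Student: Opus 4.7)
The plan is short, since Lemma \ref{legen2} already supplies the inequality $\Rank(\End_G(X) : \Aut_G(X)) \leq \vert V\vert$: it is witnessed by the explicit generating set $V$, so by definition of relative rank the bound holds. Therefore the remaining work is the purely combinatorial computation $\vert V \vert = \sum_{i=1}^r \vert U(H_i)\vert - \kappa_G(X)$.

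First I would verify that the two sets whose union defines $V$ are disjoint. The elements of the first set have the form $[x_i \mapsto y_{i,j}]$, where $G_{y_{i,j}} = K_{i,j}$ strictly contains $H_i = G_{x_i}$; the elements of the second set have the form $[x_i \mapsto x_i^\prime]$, with $G_{x_i^\prime} = H_i = G_{x_i}$. These two possibilities for $G_{\text{image point}}$ are incompatible, so the union is disjoint and cardinalities add.

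Next I would count the second set. By definition, $[x_i \mapsto x_i^\prime]$ is included exactly when $\alpha_i \geq 2$, so this contributes $r - \kappa_G(X)$ elements. For the first set, I would count the distinct pairs $(i,j)$: for each fixed $i \in [r]$ there are $r_i$ such $j$, giving $\sum_{i=1}^r r_i$ in total. (One should briefly note that the elements indexed by distinct $(i,j)$ really are distinct transformations; for different $i$ this is clear because the source orbits lie in different $\mathcal{B}_i$'s, and for the same $i$ it follows from the fact that the classes $[K_{i,j}]_{N_i}$ are chosen distinct, so the target stabilizers land in different $G$-conjugacy classes.)

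Finally, I would identify $r_i + 1$ with $\vert U(H_i)\vert$. Since $[H_i] \in \Conj_G(X)$, the subgroup $H_i$ itself belongs to $\Stabs_G(X)$, and it contributes the singleton $N_i$-class $[H_i]_{N_i}$ to $U(H_i)$. The remaining elements of $U(H_i)$ correspond to $K \in \Stabs_G(X)$ with $H_i < K$ modulo $N_i$-conjugacy, which is precisely the list $[K_{i,1}]_{N_i},\dots,[K_{i,r_i}]_{N_i}$. Hence $\vert U(H_i)\vert = r_i + 1$, and summing gives
\[ \vert V\vert = \sum_{i=1}^r r_i + (r - \kappa_G(X)) = \sum_{i=1}^r (\vert U(H_i)\vert - 1) + r - \kappa_G(X) = \sum_{i=1}^r \vert U(H_i) \vert - \kappa_G(X), \]
which yields the claimed upper bound. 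No step presents a genuine obstacle; the only mildly delicate point is remembering that $[H_i]_{N_i}$ must be counted inside $U(H_i)$ in order for the $+1$ to produce the correct cancellation against the second piece of $V$.
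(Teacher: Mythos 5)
Your proposal is correct and follows the same route as the paper, which states this corollary without proof as an immediate consequence of Lemma \ref{legen2} together with the count $\vert U(H_i)\vert = r_i + 1$; your accounting of the two pieces of $V$ and the cancellation of the $+1$'s against $r - \kappa_G(X)$ is exactly right. One tiny imprecision: distinct $N_i$-classes $[K_{i,j}]_{N_i}$ need not be distinct $G$-conjugacy classes, but distinctness of the maps $[x_i \mapsto y_{i,j}]$ still holds because distinct $N_i$-classes force $K_{i,j} \neq K_{i,j'}$, hence $y_{i,j} \neq y_{i,j'}$, so the maps already differ at $x_i$.
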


\begin{example}
When $G$ is a finite group acting by the shift action on $A^G$, with $A$ a finite set with at least two elements, it is not hard to show (see \cite[Lemma 5]{cas3}) that 
\[ \kappa_G(A^G) = \begin{cases} \vert \{ i : \vert G/H_i \vert = 2 \} \vert & \text{ if } \vert A \vert = 2,  \\
0 & \text{otherwise.}
\end{cases}\] 
\end{example}

Before being able to determine the exact relative rank of $\End_G(X)$ modulo $\Aut_G(X)$, we need to introduce few definitions. For $\tau \in \End_G(X)$, define 
\[ \ker(\tau)  = \{ (a,b) \in X \times X : \tau(a) = \tau(b) \}.   \]
This is an equivalence relation on $X$, and it is linked with the $\mathcal{L}$ Green's relation in a semigroup of transformations (see \cite[Theorem 4.5.1]{Gany}).

\begin{remark}\label{kernel}
We state the following elementary properties of kernels of transformations:
\begin{enumerate}
\item $\ker(\tau) = \{ (a,a) : a \in X \}$ if and only if $\tau$ is a bijection. 
\item $\tau, \sigma \in \End_G(X)$, we have $\ker(\tau) \subseteq \ker(\sigma \tau)$. 
\item If $\tau \in \Aut_G(X)$, then $\vert \ker(\sigma) \vert = \vert \ker(\sigma\tau) \vert$, since $(a,b) \in \ker(\sigma \tau)$ if and only if $(\tau(a), \tau(b)) \in \ker(\sigma)$. 
\end{enumerate}
\end{remark}

\begin{definition}\label{def-elemcop}
Let $K \in \Stabs_G(X)$ be such that $H_i \leq K \leq G$. We say that $\tau \in \End_G(X)$ is an \emph{elementary collapsing of type $(i, [K]_{N_i})$} if there exist $x,y\in X$, with $Gx \neq Gy$, such that:
\begin{enumerate}
 \item $G_x = H_i$ and $[G_{\tau(x)}]_{N_i} = [G_y]_{N_{i}}= [K]_{N_i}$.
 \item $\ker(\tau) = \{ (g \cdot x, g \cdot y), (g \cdot y, g \cdot x): g \in G \} \cup \{ (a,a) : a \in X \}$.
 \end{enumerate}
\end{definition}

For example, when $x,y \in X$ satisfy that $G_x = H_i$ and $[G_y]_{N_i} =[K]_{N_i} $, then $[x \mapsto y]$ is an elementary collapsing of type $(i, [K]_{N_i})$.

\begin{lemma}\label{welldef}
\begin{enumerate}
\item Suppose $\tau \in \End_G(X)$ is an elementary collapsing of type $(i,[K]_{N_i})$ and type $(i^\prime, [K^\prime]_{N_i})$. Then $i=i^\prime$ and $[K]_{N_i}=[K^\prime]_{N_i}$. 
\item The number of possible types of elementary collapsings is precisely
\[ \sum_{i=1}^r U(H_i)  - \kappa_G(X). \]
\end{enumerate}
\end{lemma}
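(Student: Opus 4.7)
The plan is to prove part (1) by analyzing how $\ker(\tau)$ pins down the witness pair $(x,y)$ up to the diagonal $G$-action together with a possible swap of $x$ and $y$, and then to prove part (2) by determining which of the $\sum_i\vert U(H_i)\vert$ \emph{a priori} pairs $(i,[K]_{N_i})$ are actually realized as the type of some elementary collapsing.

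For part (1), a preliminary observation that I would use throughout is that, in the finite setting, condition (1) of Definition \ref{def-elemcop} forces $G_y = G_{\tau(x)}$: since $\tau(x)=\tau(y)$ gives $G_y \leq G_{\tau(x)}$, and these two groups are $N_i$-conjugate by hypothesis, they have equal cardinality and hence coincide. In particular every witness satisfies $H_i = G_x \leq G_y$. From condition (2), the non-diagonal part of $\ker(\tau)$ equals the diagonal $G$-orbit of $(x,y)$ together with its image under the swap, so if $(x',y')$ witnesses another type $(i',[K']_{N_{i'}})$, then either $(x',y')=(m\cdot x, m\cdot y)$ or $(x',y')=(m\cdot y, m\cdot x)$ for some $m \in G$. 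In the first subcase, $G_{x'} = m H_i m^{-1} = H_{i'}$ forces $[H_{i'}]=[H_i]$, hence $i=i'$ by distinctness of the listed classes, and $m \in N_i$, so $G_{y'} = m G_y m^{-1}$ lies in $[G_y]_{N_i}$, giving $[K']_{N_i}=[K]_{N_i}$. In the second subcase, the preliminary observation applied to $(x',y')$ yields $G_{x'} \leq G_{y'}$, which upon substitution becomes $m G_y m^{-1} \leq m H_i m^{-1}$, forcing $G_y \leq H_i$ and hence $G_y = H_i$; both types then collapse to $(i,[H_i]_{N_i})$ after a short direct verification.

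For part (2), I would enumerate the pairs $(i,[K]_{N_i})$ with $i \in [r]$ and $[K]_{N_i} \in U(H_i)$ — there are $\sum_{i=1}^r \vert U(H_i)\vert$ of them — and then decide which ones are witnessed by some actual elementary collapsing. If $H_i < K$, a witness always exists: take $x$ with $G_x=H_i$ and $y$ with $G_y=K$; these live in different $G$-orbits because $\vert H_i \vert < \vert K \vert$, so $[H_i] \neq [K]$. If $K=H_i$, i.e.\ $[K]_{N_i}=[H_i]_{N_i}=\{H_i\}$, one needs two distinct $G$-orbits in $\mathcal{B}_i$, which exists iff $\alpha_i \geq 2$. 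Hence the unrealized types are exactly those of the form $(i,[H_i]_{N_i})$ with $\alpha_i=1$; subtracting their count $\kappa_G(X)$ gives the claimed formula.

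The main obstacle is the second subcase of part (1): the swap freedom in the kernel description could a priori produce a genuinely different type, and the key point is that the finite-cardinality argument forces $G_y = H_i$ precisely when this swap is available, so the two candidate types are forced to coincide. Once this is in hand, part (2) is a matter of bookkeeping.
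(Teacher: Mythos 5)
Your proof is correct and follows essentially the same route as the paper: part (1) is deduced from the fact that $\ker(\tau)$ determines the witness pair $(x,y)$ up to the diagonal $G$-action, and part (2) by checking which pairs $(i,[K]_{N_i})$ are realized by some $[x \mapsto y]$, the only failures being $(i,[H_i]_{N_i})$ with $\alpha_i=1$. If anything, your part (1) is more careful than the published argument, which passes directly from the equality of the two kernel sets to $Gx=Gx^\prime$ and $Gy=Gy^\prime$ without addressing the swapped possibility $Gx=Gy^\prime$, $Gy=Gx^\prime$; your finiteness observation that $G_y=G_{\tau(x)}$, hence $G_x\leq G_y$ for any witness, is exactly what is needed to show that in the swapped case both types collapse to $(i,[H_i]_{N_i})$, closing that gap.
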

\begin{proof} 
For the first part, there exist $x, x^\prime, y, y^\prime \in X$ with $Gx \neq Gy$ and $Gx^\prime \neq Gy^\prime$ satisfying Definition \ref{def-elemcop}. Then
\[  \{ (g \cdot x, g \cdot y), (g \cdot y, g \cdot x) : g \in G \} =  \{ (g \cdot x^\prime, g \cdot y^\prime), (g \cdot y^\prime, g \cdot x^\prime) : g \in G \}. \]
This shows that $Gx = Gx^\prime$ and $Gy = Gy^\prime$. The first equality implies that $[H_i]=[G_x] = [G_{x^\prime}]=[H_{i^\prime}]$, so $i=i^\prime$. Moreover, there exists $g \in G$ such that $x = g \cdot x^{\prime}$, so 
\[ H_i=G_{x} = G_{g \cdot x^\prime} = gG_{x^\prime} g^{-1} = g H_i g^{-1} \ \Rightarrow \ g \in N_i = N_G(H_i) .\]  
Now,
\[  G_{\tau(x)} = G_{\tau(g \cdot x^\prime)} = G_{g \cdot \tau(x^\prime)} = g G_{\tau(x^\prime)} g^{-1}, \]
which implies that $[G_{\tau(x)}]_{N_i} = [G_{\tau(x^{\prime})}]_{N_i} $, and so $[K]_{N_i}=[K^\prime]_{N_i}$

The second part of the lemma follows as an elementary collapsing of type $(i, [H_i]_{N_i})$ exists if and only if $\alpha_{[H_i]} \neq 1$.
\end{proof}

\begin{theorem}\label{main}
Let $G$ be a finite group acting on a finite set $X$. Let $[H_1], [H_2], \dots, [H_r]$ be the list of different conjugacy classes of subgroups in $\Conj_G(X)$. Then, 
\[ \Rank( \End_G(X) : \Aut_G(X) ) = \sum_{i=1}^r U(H_i)  - \kappa_G(X).  \]  
\end{theorem}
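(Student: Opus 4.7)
The upper bound $\Rank(\End_G(X):\Aut_G(X)) \leq \sum_{i=1}^{r}|U(H_i)| - \kappa_G(X)$ is already given by Corollary \ref{cor-low}, so the task reduces to the matching lower bound. By Lemma \ref{welldef}(2), $\sum_{i=1}^{r}|U(H_i)|-\kappa_G(X)$ equals the total number of types $(i,[K]_{N_i})$ of elementary collapsings, and by Lemma \ref{welldef}(1) each element of $\End_G(X)$ has at most one such type. So it suffices to prove: for every $W \subseteq \End_G(X)$ with $\langle W \cup \Aut_G(X)\rangle = \End_G(X)$ and every valid type $(i,[K]_{N_i})$, the set $W$ contains at least one elementary collapsing of that exact type.

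The plan is a ``first non-bijective factor'' argument. Fix a type $(i,[K]_{N_i})$ and one elementary collapsing $\tau$ of that type with witness $(x,y)$. Factor $\tau = u_m u_{m-1}\cdots u_1$ with each $u_j \in W \cup \Aut_G(X)$, set $\sigma_j := u_j\cdots u_1$, and pick the minimal $\ell$ for which $\sigma_\ell$ is non-bijective; then $\sigma_{\ell-1}\in\Aut_G(X)$, and since a bijective $u_\ell$ would keep $\sigma_\ell$ bijective, $u_\ell\in W$. By Remark \ref{kernel}, $\ker(\sigma_\ell)\subseteq \ker(\tau)$, and $\ker(\sigma_\ell)$ is non-trivial, symmetric, and $G$-invariant. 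Because the non-diagonal part of $\ker(\tau)$ is the union of exactly two $G$-orbits $G\cdot(x,y)$ and $G\cdot(y,x)$ which are swapped by transposition, $G$-invariance and symmetry force $\ker(\sigma_\ell) = \ker(\tau)$. Writing $u_\ell = \sigma_\ell\sigma_{\ell-1}^{-1}$ and transporting through the $G$-equivariant bijection $\sigma_{\ell-1}$, I obtain $\ker(u_\ell) = G\cdot(x'',y'') \cup G\cdot(y'',x'') \cup \{(a,a):a\in X\}$ with $x'' := \sigma_{\ell-1}(x)$, $y'' := \sigma_{\ell-1}(y)$, $G_{x''} = H_i$, $[G_{y''}]_{N_i} = [K]_{N_i}$, and $Gx''\neq Gy''$.

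The main technical obstacle I anticipate is verifying the image condition $[G_{u_\ell(x'')}]_{N_i}=[K]_{N_i}$ of Definition \ref{def-elemcop}, since a priori $G_{u_\ell(x'')}$ could strictly contain $G_{y''}$. The resolution is a two-sided squeeze. Let $z:=u_\ell(x'')=\sigma_\ell(x)$; since $(x'',y'')\in\ker(u_\ell)$ we also have $u_\ell(y'')=z$, and the $G$-equivariance of $u_\ell$ applied at $y''$ yields $G_y = G_{y''} \leq G_z$. On the other hand, $\tau(x)=(u_m\cdots u_{\ell+1})(z)$, so the $G$-equivariance of the tail composition yields $G_z \leq G_{\tau(x)}$. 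Because $G$ is finite and both $G_y$ and $G_{\tau(x)}$ lie in the $G$-conjugacy class $[K]$, they share the order $|K|$; the inclusions $G_y \leq G_z \leq G_{\tau(x)}$ then force $G_z=G_y$, hence $[G_z]_{N_i}=[G_y]_{N_i}=[K]_{N_i}$. Thus $u_\ell\in W$ is an elementary collapsing of type $(i,[K]_{N_i})$. As elementary collapsings of distinct types are distinct elements, assigning such a $u_\ell$ to each valid type gives $|W| \geq \sum_{i=1}^r |U(H_i)| - \kappa_G(X)$, completing the proof.
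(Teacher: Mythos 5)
Your proposal is correct and follows the same overall strategy as the paper's proof: the upper bound comes from Corollary \ref{cor-low}, and the lower bound comes from a ``first non-invertible factor'' argument showing that any set generating $\End_G(X)$ modulo $\Aut_G(X)$ must contain an elementary collapsing of every realizable type, which together with Lemma \ref{welldef} gives the count. Where you genuinely diverge is in the two technical verifications. For the kernel condition, the paper compares cardinalities, using Remark \ref{kernel}(3) to get $\vert \ker(\tau_k)\vert \leq \vert \ker([x\mapsto y])\vert$ and deducing equality from the containment of the orbit of $(a,b)$; you instead observe that the non-diagonal part of $\ker(\tau)$ is exactly two $G$-orbits of pairs interchanged by transposition, so any non-trivial, symmetric, $G$-invariant subrelation of it must be all of it --- a cleaner structural argument. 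For the image-stabilizer condition $[G_{u_\ell(x'')}]_{N_i}=[K]_{N_i}$, the paper proves a separate Claim (that the first non-invertible factor maps each block $\mathcal{B}_m$, $m\neq i$, onto itself and preserves stabilizers there, with a variant when $y\in\mathcal{B}_i$), whereas you bypass that entirely with the squeeze $G_{y''}\leq G_z\leq G_{\tau(x)}$ combined with $\vert G_{y''}\vert=\vert G_{\tau(x)}\vert=\vert K\vert$, which forces equality by finiteness since both endpoints are conjugate to $K$. Your squeeze is shorter and uses less information; the paper's Claim is more informative about the global behaviour of $\tau_k$ but is not strictly needed for the count. Both routes are sound; note only that your argument, like the paper's, relies on the fact that an elementary collapsing of the fixed type actually exists in $\End_G(X)$ (one may always take $\tau=[x\mapsto y]$), which is precisely what makes the type ``valid'' in the count of Lemma \ref{welldef}(2).
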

\begin{proof}
Let $W \subseteq \End_G(X)$ such that $\langle \Aut_G(X) \cup W \rangle = \End_G(X)$. We shall show that $W$ must contain at least one elementary collapsing of each possible type. Suppose that $K \in \Stabs_G(X)$ satisfies that $H_i \leq K \leq G$. Let $x,y \in X$ be such that $G_x = H_i$ and $[G_y]_{N_i} = [K]_{N_i}$. Since $[x \mapsto y] \in \End_G(X)$, there exist $\tau_1, \tau_2, \dots, \tau_s \in \Aut_G(X) \cup W$ such that
\[ [x \mapsto y] = \tau_s \tau_{s-1} \dots \tau_2 \tau_1. \]

Let $k$ be the smallest index such that $\tau_k$ is non-invertible. We must have $\tau_k \in W$. 

\begin{claim}\label{claim-key}
For all $m \in [r]- \{ i \}$, we have:
\begin{enumerate}
\item $\tau_k(\mathcal{B}_{m}) = \mathcal{B}_m$, and $G_{z} = G_{\tau_k(z)}$ for every $z \in \mathcal{B}_m$.
\item If $y \in \mathcal{B}_i$, then $\tau_k(\mathcal{B}_{i}) \subseteq \mathcal{B}_i$, and $G_z = G_{\tau_k(z)}$ for every $z \in \mathcal{B}_i$. 
\end{enumerate}
\end{claim}
\begin{proof}
Suppose there exists $w \in \mathcal{B}_m$ such that $\tau_k(w) \not\in \mathcal{B}_m$. Let $z := \tau_1^{-1} \dots \tau_{k-1}^{-1}(w)$. By Lemma \ref{lema1}, $z \in \mathcal{B}_m$, and so $[x \mapsto y] (z) = z$. This implies that $\tau_s \dots \tau_{k+1}$ must map $\tau_k(w) \not \in \mathcal{B}_m$ to an element of $\mathcal{B}_m$, which is impossible by Lemma \ref{lema1}. This proves that $\tau_k(\mathcal{B}_{m}) \subseteq \mathcal{B}_m$. If $\tau_k(\mathcal{B}_{m})$ is properly contained in $\mathcal{B}_m$, then there must exist $w_1, w_2 \in \mathcal{B}_m$ such that $\tau_k(w_1) = \tau_k(w_2)$. However, this implies that $z_1 := \tau_1^{-1} \dots \tau_{k-1}^{-1}(w_1) \in \mathcal{B}_m$ and $z_2 := \tau_1^{-1} \dots \tau_{k-1}^{-1}(w_2) \in \mathcal{B}_m$, with $z_1 \neq z_2$, satisfy that $[x \mapsto y](z_1) = [x \mapsto y](z_2)$, which contradicts the definition of $[x \mapsto y]$. Thus $\tau_k(\mathcal{B}_{m}) = \mathcal{B}_m$, and $G_{z} = G_{\tau_k(z)}$ for every $z \in \mathcal{B}_m$, follows by Lemma \ref{lema1} and the finiteness of $G$. 

The second part of the claim is done analogously. 
\end{proof}

We shall show that $\tau_k$ is an elementary collapsing of type $(i, [K]_{N_i})$ by verifying the two properties of Definition \ref{def-elemcop}.
\begin{enumerate}
\item As $\tau_k$ is non-invertible, there exist $a,b \in X$, $a \neq b$, such that $\tau_k(a) = \tau_k(b)$. Let $a^\prime := \tau_1^{-1} \dots \tau_{k-1}^{-1}(a)$ and $b^\prime := \tau_1^{-1} \dots \tau_{k-1}^{-1}(b)$. Then
\[ \tau_s \tau_{s-1} \dots \tau_2 \tau_1(a^\prime) = \tau_s \dots \tau_k(a) = \tau_s \dots \tau_k(b) = \tau_s \tau_{s-1} \dots \tau_2 \tau_1(b^\prime), \]
implies that $(a^\prime,b^\prime) \in \ker([x \mapsto y])$, so, without loss, there exists $g \in G$ such that 
\begin{align*}
 x& = g \cdot a^\prime = g \cdot \tau_1^{-1} \dots \tau_{k-1}^{-1}(a) =\tau_1^{-1} \dots \tau_{k-1}^{-1}(g \cdot a)  \\
 y & = g \cdot b^\prime = g \cdot \tau_1^{-1} \dots \tau_{k-1}^{-1}(b) = \tau_1^{-1} \dots \tau_{k-1}^{-1}(g \cdot b). 
\end{align*} 
By Lemma \ref{lema1}, $H_i = G_x = G_{g \cdot a}$ and $G_y = G_{g \cdot b}$. By $G$-equivariance, $\tau_k (g \cdot a) = \tau_k (g \cdot b)$. By Claim \ref{claim-key}, $G_{\tau_k(g \cdot b)} = G_{g \cdot b} = G_y$, so $G_{\tau_k(g \cdot a)} = G_y$. Hence, 
\[ [G_{\tau_k(g \cdot a)}]_{N_i} = [G_{g \cdot b}]_{N_i} = [G_y]_{N_i} = [K ]_{N_i},\]
 and $\tau_k$ satisfies part (1.) of Definition \ref{def-elemcop} with $g \cdot a$ and $g \cdot b$.

\item  By $G$-equivariance, $\tau_k(h \cdot a) = \tau_k(h \cdot b)$ for all $h \in G$.
By Remark \ref{kernel} (2.), 
\[  \ker(\tau_k \tau_{k-1} \dots \tau_2 \tau_1 ) \subseteq \dots  \ker(\tau_s \dots \tau_1) = \ker([x \mapsto y]). \]  
Moreover, as $\tau_{k-1} \dots \tau_1 \in \Aut_G(X)$, we have by Remark \ref{kernel} (3.), that
\[ \vert \ker(\tau_k) \vert = \vert \ker(\tau_k \tau_{k-1} \dots \tau_2 \tau_1) \vert \leq \vert \ker([x \mapsto y]) \vert. \]
Therefore, $\ker(\tau_k) = \{ (h \cdot a, h \cdot b), (h \cdot b, h \cdot a) : g \in G \} \cup \{ (c,c) : c \in X\}$. 
\end{enumerate}

This shows that $W$ must contain an elementary collapsing of each possible type, and the result follows by Lemma \ref{welldef} and Corollary \ref{cor-low}.
\end{proof}


\section*{Acknowledgments}

The first and second author were supported by a CONACYT Basic Science Grant (No. A1-S-8013) and a PhD CONACYT National Scholarship, respectively. Both authors sincerely thank Csaba Schneider for his kind and enriching replies to our questions on the Imprimitive Wreath Product Embedding Theorem. We also thank the anonymous referee of this paper for his insightful comments.  


\end{document}